\DeclareMathOperator{\GL}{GL}
\DeclareMathOperator{\vol}{vol}
\DeclareMathOperator{\im}{im}
\newtheorem{lem}{Lemma}[section]
\newtheorem{thm}[lem]{Theorem}
\newtheorem{prop}[lem]{Proposition}
\newtheorem{cor}[lem]{Corollary} 
\theoremstyle{definition}
\newtheorem{defi}[lem]{Definition}
\newtheorem{question}[lem]{Question}
\newtheorem{exa}[lem]{Example}
\newtheorem{rem}[lem]{Remark} 
\newtheorem{ack}[lem]{Acknowledgements}
\newcommand{\N}{\ensuremath {\mathbb{N}}}
\newcommand{\F}{\ensuremath {\mathbb{F}}}
\newcommand{\R} {\ensuremath {\mathbb{R}}}
\newcommand{\Q} {\ensuremath {\mathbb{Q}}}
\newcommand{\Z} {\ensuremath {\mathbb{Z}}}
\DeclareMathOperator{\rk}{rk}
\def\fa#1{\forall_{#1} \quad}
\def\exi#1{\exists_{#1} \quad}
 \newcommand\norm{\bBigg@{0.8}}
 \newcommand{\inparens}[2][flex]{\csname #1l\endcsname(#2%
                                 \csname #1r\endcsname)\mathclose{}}
 \newcommand{\inangles}[2][flex]{\csname #1l\endcsname\langle#2%
                                 \csname #1r\endcsname\rangle\mathclose{}} 
 \newcommand{\innorm}[2][flex]{\csname #1l\endcsname|#2%
                                 \csname #1r\endcsname|\mathclose{}}
 \newcommand{\indnorm}[2][flex]{\csname #1l\endcsname\|#2%
                                 \csname #1r\endcsname\|\mathclose{}}
 \newcommand{\indnorml}[4][flex]{\csname #1l\endcsname\|#2%
                                 \csname #1r\endcsname\|_{#3}^{#4}\mathclose{}}
\newcommand{\sv}[2][flex]{\indnorml[#1]{#2}{\R}{}}%
\newcommand{\isv}[2][norm]{\indnorml[#1]{#2}{\Z}{}}
\newcommand{\pfcl}[2][flex]{\csname #1l\endcsname[#2%
                            \csname #1r\endcsname]}
\newcommand{\ifsv}[2][norm]{\csname #1l\endcsname\bracevert\!#2\!%
                            \csname #1r\endcsname\bracevert}
\newcommand{\stisv}[2][flex]{\indnorml[#1]{#2}{\Z}{\infty}}
\DeclareMathOperator{\supp}{supp}
\newcommand{\svp}[3][norm]{\indnorml[#1]{#2}{(\F_{#3})}{}}
\newcommand{\suv}[3][norm]{\indnorml[#1]{#2}{(#3)}{}}
\newcommand{\stsvp}[3][norm]{\indnorml[#1]{#2}{(\F_{#3})}{\infty}}
\newcommand{\stsuv}[3][norm]{\indnorml[#1]{#2}{(#3)}{\infty}}
\newcommand{\pfc}[1]{%
  \widehat{#1}}
\def\fface#1#2{%
  {#2}\rfloor_{#1}%
}
\def\bface#1#2{%
  {}_{#1}\lfloor{#2}%
}
\def\args{\;\cdot\;}
\newcommand{\actson}{\curvearrowright}
\title{Simplicial volume with $\F_p$-coefficients}
\author{Clara L\"oh}
\subjclass[2010]{57R19, 20E18}
\keywords{simplicial volume, stable integral simplicial volume}
\def\draftinfo{}
\date{\today.\ \copyright{\ C.~L\"oh 2018}. 
    This work was supported by the CRC~1085 \emph{Higher Invariants} 
    (Universit\"at Regensburg, funded by the DFG)\draftinfo}
\begin{document}

\begin{abstract}
  For primes~$p$, we investigate an $\F_p$-version of simplicial
  volume and compare these invariants with their siblings over other
  coefficient rings. We will also consider the associated gradient
  invariants, obtained by stabilisation along finite coverings.
  Throughout, we will discuss the relation between such simplicial
  volumes and Betti numbers.
\end{abstract}
\maketitle

\section{Introduction}

Simplicial volumes measure the size of manifolds in terms of
singular fundamental cycles. Different choices of coefficients
and of counting simplices in singular cycles lead to different
versions of simplicial volume.

The classical simplicial volume~$\sv \args$ of an oriented closed
connected manifold is defined in terms of the $\ell^1$-norm on the
singular chain complex with real coefficients, i.e., we use a weighted
count of singular simplices~\cite{vbc,mapsimvol}
(Section~\ref{subsec:basicdefs} contains a precise definition).

We will study \emph{weightless} simplicial volumes, i.e., we will
ignore the weight of the coefficients and only count the number of
singular simplices. In this way, for \emph{every} coefficient
ring~$R$, we obtain a notion of weightless simplicial volume~$\suv
\args R$ (Definition~\ref{def:suv}). In particular, this leads to an
$\F_p$-version of simplicial volume:

\begin{defi}\label{def:svp}
  Let $M$ be an oriented closed connected $n$-manifold and let $p \in \N$
  be a prime number. Then the \emph{$\F_p$-simplicial volume~$\svp M p$ of~$M$} is
  defined as 
  \begin{align*}
    \min \biggl\{ m \in \N \biggm|
    \sum_{j=1}^m a_j \cdot \sigma_j \in C_n(M;\F_p)
    \text{ is an $\F_p$-fundamental cycle of~$M$} \biggr\} 
    .
  \end{align*}
\end{defi}

The following natural questions emerge:
\begin{enumerate}
  \def\theenumi{(\Alph{enumi})}
  \def\labelenumi{\theenumi}
\item\label{q:topmeaning} What is the topological/geometric meaning of these invariants?
\item\label{q:topinv} How do these invariants relate to other topological invariants?
\item\label{q:diffp} (How) Does the choice of the prime number affect simplicial volume?
\item\label{q:Q} Do these invariants relate to the weightless $\Q$-simplicial volume?
\item\label{q:stable} What happens if $\F_p$-simplicial volumes are stabilised along a tower of
  finite coverings?
\end{enumerate}
In the present paper, we will study these questions, keeping a focus on
the comparison with Betti numbers. In order to simplify the discussion,
we will start with Questions~\ref{q:diffp} and \ref{q:Q}. 

\subsection{Comparison of coefficients}

Using different finite fields as coefficients for singular homology,
in general, leads to different Betti numbers. But the universal coefficient
theorem implies that this dependence on the coefficients is rather limited:
If $X$ is a finite CW-complex and $k \in \N$, then for all but finitely many
primes~$p \in \N$ we have
\[ b_k(X;\F_p) = b_k(X;\Q).
\]
The situation for weightless simplicial volumes is similar: In
general, different prime numbers will lead to different simplicial
volumes over the corresponding finite fields
(Example~\ref{exa:diffp}). But for every manifold, this exceptional
behaviour is limited to a finite number of primes:

\begin{thm}\label{thm:fpq}
  Let $M$ be an oriented closed connected manifold. Then for all but finitely
  many primes~$p \in \N$ we have
  \[ \svp M p = \suv M \Q.
  \]
\end{thm}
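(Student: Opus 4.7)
The plan is to establish the two inequalities $\svp M p \le \suv M \Q$ and $\svp M p \ge \suv M \Q$ separately, each for all but finitely many primes $p$.

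For the upper bound, I would start from a $\Q$-fundamental cycle $c = \sum_{j=1}^m a_j \sigma_j$ realising $m = \suv M \Q$, with all $a_j \in \Q^\times$. Only finitely many primes divide a numerator or denominator of one of these finitely many coefficients. For any other $p$, clearing a common denominator $N$ gives an integral cycle $Nc \in Z_n(M;\Z)$ representing $N[M]_\Z$ with $p\nmid N$; reducing modulo $p$ and multiplying by $N^{-1} \in \F_p^\times$ then produces an $\F_p$-fundamental cycle with the same $m$-element support, so $\svp M p \le m$.

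For the lower bound, let $c = \sum_{j=1}^{m_p} b_j \sigma_j$ be a minimal $\F_p$-fundamental cycle, with $m_p = \svp M p$, $b_j \in \F_p^\times$ and $S := \supp c$. I would encode the condition ``chain supported on $S$ is a fundamental cycle'' as a $\Z$-linear system $\tilde A_S\, x = \tilde b_S$: the cycle condition contributes entries in $\{-1,0,1\}$ coming from $\partial$ restricted to $\Z\langle S\rangle$, and the fundamental-class condition contributes one further row obtained by evaluating a fixed integral cocycle dual to $[M]_\Z$. By assumption the system has an $\F_p$-solution. A standard rank/Smith-normal-form argument then shows that $\F_p$-solvability upgrades to $\Q$-solvability as soon as $p$ avoids the finitely many primes dividing the relevant minors of the augmented matrix. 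Moreover, if the $\Q$-solution space is contained in some coordinate hyperplane $\{x_j=0\}$, then so is its mod-$p$ reduction, contradicting the existence of $c$ with all $b_j \ne 0$; hence the $\Q$-solution can be perturbed within its affine solution space to the Zariski-open locus $(\Q^\times)^{|S|}$, giving a $\Q$-fundamental cycle supported on $S$ and $\suv M \Q \le |S| = \svp M p$.

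The subtle point, which I expect to be the main obstacle, is \emph{uniformity}: the ``bad primes'' above depend on the specific support $S=S_p$, and as $p$ varies this support ranges over an infinite family of $m_p$-tuples of singular simplices, so a naive union of bad-prime sets could be infinite. To control this, I would exploit the uniform upper bound $\svp M p \le \isv M$ (classical integral simplicial volume), which forces $|S_p|$ to be bounded independently of $p$; then, after observing that only finitely many combinatorial face-incidence patterns occur on a bounded number of simplices and carefully normalising the cocycle data for the fundamental-class row, the bad primes should be expressible in terms of a finite collection of integer matrices determined by $M$ alone. Making this uniformity precise is the technical heart of the proof.
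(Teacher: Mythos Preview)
Your upper bound matches the paper's (excluding primes dividing numerators is unnecessary but harmless). For the lower bound you have correctly located the difficulty, but the resolution you sketch does not work as stated.

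The augmented matrix~$\tilde A_S$ has two blocks: the cycle block~$A$, which depends only on the face-incidence pattern of~$S$ (what the paper calls the \emph{model}), and the cocycle row~$d = (d_1,\dots,d_m)$, whose entries are the values of your cocycle on the individual singular simplices~$\sigma_j$. These integers are unbounded as~$S_p$ varies over~$p$, and there is no evident way to ``normalise'' them down to finitely many possibilities; so bounding bad primes via minors of the \emph{augmented} matrix, as you propose, does not yield a uniform finite set.

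The paper sidesteps this by discarding the inhomogeneous row altogether. It encodes only the cycle condition via the homogeneous system~$Ax=0$; since~$A$ is determined by the model, it ranges over finitely many integer matrices. After pigeonholing on models (using that the values~$\svp M p$ are bounded by~$\isv M$), a Smith-normal-form argument lifts the~$\F_p$-coefficients at one suitable prime to an integral solution of~$Ax=0$ with the \emph{same} singular simplices. The resulting~$\Z$-cycle~$c$ then represents a nonzero class in~$H_n(M;\Z)$ simply because its mod-$p$ reduction is the~$\F_p$-fundamental class; a rational multiple of~$c$ is the required~$\Q$-fundamental cycle. Your approach can also be repaired along different lines: for the inhomogeneous system, the implication ``$\F_p$-solvable $\Rightarrow$ $\Q$-solvable'' actually holds whenever~$\mathrm{rank}_{\F_p} A = \mathrm{rank}_{\Q} A$ (a rank-comparison shows~$d \notin \mathrm{rowspan}_{\Q} A$), and these bad primes depend only on~$A$, not on~$d$ --- but this is not what you wrote. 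In either version, the perturbation to~$(\Q^\times)^{|S|}$ is unnecessary: a solution with some vanishing coordinates is still a fundamental cycle, just with support properly contained in~$S$.
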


In the same spirit, as for Betti numbers, the weightless simplicial
volumes of fields with equal characteristic coincide
(Theorem~\ref{thm:equalchar}).

In contrast with the Betti number case, the weightless $\Q$-simplicial
volume does \emph{not} necessarily coincide with the weightless
$\Z$-simplicial volume (Proposition~\ref{prop:minimal} shows that $\R
P^3$ is an example of this type) and that the ordinary integral
simplicial volume~$\isv \args$ (Section~\ref{subsec:basicdefs}) tends
to be much bigger than the weightless $\Q$-simplicial volume:

\begin{thm}\label{thm:gap}
  For every~$K \in \N$ there exists an oriented closed connected $3$-manifold with
  \[ \isv M > K \cdot \suv M \Q.  
  \]
\end{thm}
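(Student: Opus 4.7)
I would exhibit the lens spaces $M_p = L(p,1)$ (for primes $p \to \infty$) as witnesses, exploiting that they are $p$-fold covered by $S^3$ for the upper bound, and invoking a torsion-type lower bound on integral simplicial volume for the lower bound.

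\emph{Uniform upper bound on $\suv{M_p}{\Q}$.} Consider the universal covering $\pi \colon S^3 \to L(p,1)$, which has degree~$p$. Since $S^3$ admits a $\Delta$-complex structure consisting of just two $3$-simplices (glue two copies of $\Delta^3$ along their boundary $2$-spheres), there is an integral fundamental cycle $z \in C_3(S^3;\Z)$ supported on only two singular simplices. The pushforward $\pi_\ast z \in C_3(L(p,1);\Z)$ is supported on at most two singular simplices and represents $p\cdot [L(p,1)]_\Z$. Hence $\tfrac{1}{p}\pi_\ast z$ is a rational fundamental cycle of $L(p,1)$ supported on at most two simplices, giving $\suv{L(p,1)}{\Q} \le 2$ independently of~$p$.

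\emph{Unboundedness of $\isv{M_p}$.} It then suffices to see $\isv{L(p,1)} \to \infty$ as $p \to \infty$. I would invoke (or reprove) a torsion lower bound of the form
\[
\isv M \ge c\cdot \log_2 |H_1(M;\Z)_{\mathrm{tors}}|
\]
valid for every closed oriented $n$-manifold~$M$, with $c > 0$ depending only on~$n$. Since $|H_1(L(p,1);\Z)| = p$, this specialises to $\isv{L(p,1)} \ge c \log_2 p$, which goes to infinity with~$p$. Given $K \in \N$, picking a prime~$p$ with $c\log_2 p > 2K$ then produces $M = L(p,1)$ satisfying $\isv M > 2K \ge K \cdot \suv M \Q$.

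\emph{Main obstacle.} The hard part is establishing the torsion lower bound. The natural strategy is to associate to an integral fundamental cycle $c = \sum_j a_j\sigma_j$ of $\ell^1$-norm~$N$ a finite $\Delta$-complex $X$ generated by the $\sigma_j$'s and their iterated faces, with $O(N)$ cells in each dimension, together with a continuous map $f \colon |X| \to L(p,1)$ sending a top $\Z$-class to $[L(p,1)]_\Z$. A standard covering-space argument on the induced top class (using that $\Z/p$ has no proper nontrivial subgroups for $p$ prime) forces $f_\ast \colon \pi_1(|X|) \twoheadrightarrow \Z/p$, and hence a surjection $H_1(|X|;\Z) \twoheadrightarrow \Z/p$. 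The subtle point is that this surjection alone does \emph{not} bound $|H_1(|X|;\Z)_{\mathrm{tors}}|$ by $p$, since a free summand of $H_1(|X|;\Z)$ could absorb the $\Z/p$-quotient; one must therefore supplement the argument with a Smith-normal-form or Hadamard-type determinant estimate on the cellular boundary operator of $X$ (whose matrix entries are uniformly bounded) to extract the genuine $\log p$ factor.
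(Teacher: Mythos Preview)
Your high-level plan (use lens spaces~$L(p,1)$, bound $\suv{L(p,1)}{\Q}$ uniformly via the covering by~$S^3$, and show $\isv{L(p,1)}\to\infty$) matches the paper's. Your upper bound $\suv{L(p,1)}{\Q}\le 2$ is fine; the paper sharpens this to~$1$ by using that $\isv{S^3}=1$ for odd spheres, but any uniform bound suffices.

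For the unboundedness of~$\isv{L(p,1)}$, however, the paper takes a different route: it simply quotes a finiteness theorem for integral simplicial volume in dimension~$3$ (only finitely many oriented closed connected $3$-manifolds have $\isv{\,\cdot\,}\le B$ for each~$B$), and observes that the~$L(n,1)$ are pairwise non-homeomorphic. No torsion estimate is needed.

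Your alternative via a logarithmic torsion bound is a legitimate and well-known strategy, but your reproof sketch does not close the gap you yourself flag. Bounding the elementary divisors of the cellular boundary of~$X$ via Smith normal form and Hadamard only controls $|H_1(X;\Z)_{\mathrm{tors}}|$; it says nothing about~$p$, because your input is a \emph{surjection} $H_1(X;\Z)\twoheadrightarrow \Z/p$ and a free summand of~$H_1(X;\Z)$ surjects onto every cyclic group. The fix is to reverse the arrow using Poincar\'e duality on~$M$ (not on~$X$): if $D_*\subset C_*(M;\Z)$ is the subcomplex generated by the simplices of an optimal fundamental cycle~$c$ and their faces, then the cap product with~$c$ shows that the restriction map $H^{n-k}(M;\Z)\to H^{n-k}(\mathrm{Hom}(D_*,\Z))$ is \emph{injective}. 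Hence $\Z/p\cong H^2(L(p,1);\Z)_{\mathrm{tors}}$ embeds into the torsion of the cohomology of a cochain complex with $O(m)$ generators (where $m\le\isv{M}$) and bounded matrix entries; now the Hadamard estimate genuinely bounds~$p$ and gives $\log p \le C\cdot \isv{L(p,1)}\cdot\log\isv{L(p,1)}$, which is enough. Without this injection step, your argument as written does not yield the lower bound.
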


\begin{question}[weight problem]\label{q:weight}
  Let $M$ be an oriented closed connected manifold.  What is the
  difference between integral simplicial volume~$\isv M$ and the
  weightless integral simplicial volume~$\suv M
  \Z$\;?
\end{question}

\subsection{Topological meaning}

We will now come to questions~\ref{q:topmeaning} and~\ref{q:topinv}.
Weightless simplicial volumes give Betti number bounds and provide
obstructions against domination of manifolds
(Definition~\ref{def:dom}). We explain this in more detail:

If $M$ is an oriented closed connected manifold and $p \in \N$ is
a prime number, then Poincar\'e duality shows that for all~$k \in \N$
we have
\[ b_k(M;\F_p) \leq \svp M p 
\]
(Proposition~\ref{prop:PD}). This estimate can be used to calculate
weightless simplicial volumes in simple cases
(Section~\ref{subsec:hombounds}); conversely, estimates of this type
lead to homology gradient bounds, when stabilising along towers of
finite coverings (Section~\ref{subsec:istable}).

Classical simplicial volume is a homotopy invariant and compatible with mapping
degrees: If $f \colon M \longrightarrow N$ is a continuous map between oriented
closed connected manifolds of the same dimension, then
\[ \mathopen| \deg f| \cdot \sv N \leq \sv M. 
\]
Therefore, classical simplicial volume gives a priori bounds on the set
of possible mapping degrees~\cite{vbc}.

Similarly, also weightless simplicial volumes are homotopy invariants
and they satisfy the following estimates
(Proposition~\ref{prop:degreemon}): Let $f \colon M \longrightarrow N$
be a continuous map between oriented closed connected manifolds of the
same dimension.
\begin{itemize}
\item If $p \in \N$ is a prime number with~$p \nmid \deg f$, then $\svp N p \leq \svp M p$.
\item If $\deg f \neq 0$, then $\suv N \Q \leq \suv M\Q$.
\end{itemize}
In addition to the similarity with the degree estimate for classical
simplicial volume these monotonicity properties are similar to the
corresponding monotonicity statements for Betti numbers with
$\F_p$-coefficients and $\Q$-coefficients, respectively. In
particular, the weightless $\Q$-simplicial volume complements the
Betti number obstruction against domination of manifolds
(Section~\ref{subsec:domination}).

\subsection{Stabilisation along finite coverings}\label{subsec:istable}

Finally, we address the last question~\ref{q:stable} on stabilisation
of $\F_p$-simplicial volumes along towers of finite coverings.

\begin{defi}\label{def:stsvp}
  Let $M$ be an oriented closed connected manifold and let $p \in \N$
  be a prime number. Then the \emph{stable $\F_p$-simplicial volume of~$M$}
  is defined as
  \[ \stsvp M p := \inf
  \Bigl\{ \frac{\svp N p}d 
  \Bigm| \text{$d \in \N$, $N \rightarrow M$ is a $d$-sheeted covering}
  \Bigr\}
  \in \R_{\geq 0}.
  \]
\end{defi}

\begin{rem}[inherited vanishing]
  Let $M$ be an oriented closed connected manifold and let $p \in \N$ be
  a prime number. Then reduction modulo~$p$ shows that
  \[ \stsvp M p \leq \stisv M.
  \]
  In particular, all known vanishing results for the stable integral
  simplicial volume~$\stisv M$ imply corresponding vanishing results
  for~$\stsvp M p$. This includes, for example, the case of aspherical
  manifolds with residually finite amenable fundamental
  group~\cite{FLPS}, smooth aspherical manifolds with non-trivial
  $S^1$-action~\cite{fauser}, and graph
  manifolds~\cite{ffl}. Moreover, also aspherical manifolds with
  small enough amenable covers have vanishing stable weightless
  simplicial volume~\cite{sauer} 
  (Example~\ref{exa:smallamenable}). 
\end{rem}
  
In addition, we have the following vanishing phenomenon in the presence of
non-trivial self-maps: If $M$ is an oriented closed connected manifold
that admits a self-map of non-trivial degree, then the classical
simplicial volume satisfies~$\sv M = 0$ (this follows from the degree
estimate). If $M$ is aspherical and the fundamental group is
residually finite, then this vanishing also carries over to
$L^2$-Betti numbers~\cite[Theorem~14.40]{lueck}. In the same way, we obtain
vanishing of stable weightless simplicial volumes:

\begin{thm}\label{thm:selfmap}
  Let $M$ be an oriented closed connected aspherical manifold with residually finite
  fundamental group. If $M$ admits a continuous map~$f \colon M \longrightarrow M$
  with $\deg f \not \in \{-1,0,1\}$, then for all primes~$p \in \N$ with $p \nmid \deg f$
  we have
  \[ \stsvp M p = 0 = \stsuv M \Q.
  \]
\end{thm}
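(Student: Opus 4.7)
The plan is to use the self-map $f$ to build, by iteration, a tower of finite connected coverings
\[
M = M_0 \leftarrow M_1 \leftarrow M_2 \leftarrow \cdots
\]
by closed aspherical manifolds $M_i$, each again admitting a self-map of degree $d$, and such that every intermediate covering $M_{i+1} \to M_i$ has degree $k_{i+1} \geq 2$ with $k_{i+1} \mid d$. The covering $M_n \to M$ then has degree at least $2^n$, and once the weightless simplicial volumes along the tower are controlled, dividing by the degree will force the stable invariants to vanish.

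For the inductive step, assume that $M_i$ is a closed aspherical manifold with finitely generated residually finite fundamental group and that $f_i \colon M_i \to M_i$ has degree $d$ with $|d| \geq 2$. Set $H_{i+1} := (f_i)_*(\pi_1 M_i) \leq \pi_1 M_i$ and let $p_{i+1} \colon M_{i+1} \to M_i$ be the connected covering corresponding to $H_{i+1}$. The index $k_{i+1} := [\pi_1 M_i : H_{i+1}]$ is finite, since otherwise $f_i$ would lift to a non-compact aspherical cover and force $\deg f_i = 0$. More importantly, $k_{i+1} \geq 2$: as a finite-index subgroup of $\pi_1 M$, the group $\pi_1 M_i$ is finitely generated and residually finite, hence Hopfian by Mal'cev; if $(f_i)_*$ were surjective it would then be an automorphism, making $f_i$ a self-homotopy equivalence of the aspherical manifold $M_i$ and forcing $\deg f_i = \pm 1$, a contradiction. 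Factoring $f_i = p_{i+1} \circ \bar f_i$ through the lift $\bar f_i \colon M_i \to M_{i+1}$ yields $d = k_{i+1} \cdot \deg \bar f_i$, so $k_{i+1}$ divides $d$ and $\deg \bar f_i \neq 0$; setting $f_{i+1} := \bar f_i \circ p_{i+1}$ produces a self-map of $M_{i+1}$ of degree $d$, completing the induction. This invocation of the Hopfian property is the main obstacle: without it, the tower could collapse to a chain of self-homotopy equivalences and never reach arbitrarily high covering degree.

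I then close by applying the degree monotonicity of weightless simplicial volume. Since $p \nmid d$ and $k_{i+1} \mid d$, also $p \nmid \deg \bar f_i$, so the lift $\bar f_i$ gives $\svp{M_{i+1}}{p} \leq \svp{M_i}{p}$; iterating yields $\svp{M_n}{p} \leq \svp{M}{p}$. The rational analogue, which only needs $\deg \bar f_i \neq 0$, similarly produces $\suv{M_n}{\Q} \leq \suv{M}{\Q}$. Combining this with the fact that $M_n \to M$ is a covering of degree at least $2^n$, the definition of the stable invariants gives
\[
\stsvp M p \leq \frac{\svp{M_n}{p}}{2^n} \leq \frac{\svp{M}{p}}{2^n}
\qquad\text{and}\qquad
\stsuv M \Q \leq \frac{\suv{M_n}{\Q}}{2^n} \leq \frac{\suv{M}{\Q}}{2^n},
\]
and letting $n \to \infty$ forces both stable invariants to vanish. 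The hypothesis $p \nmid \deg f$ enters only at this final step, to make the degree monotonicity for $\svp{\args}{p}$ apply to each lift $\bar f_i$, whose degree is the divisor $d/k_{i+1}$ of $d$.
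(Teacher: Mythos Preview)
Your proof is correct and follows essentially the same route as the paper: both arguments lift the self-map through the covering associated with the image of~$\pi_1(f)$, invoke the Hopfian property of finitely generated residually finite groups to guarantee that this covering is nontrivial, apply degree monotonicity (Proposition~\ref{prop:degreemon}) to bound the weightless simplicial volume of the cover by that of~$M$, and iterate to force the stable invariant to vanish. The only cosmetic difference is that the paper works with the coverings of~$M$ associated with~$\im\pi_1(f^{\circ k})$ and extracts an unbounded subsequence of indices, whereas you organise the same data as an inductive tower~$M_{i+1}\to M_i$ with each step of degree at least~$2$; this is the same mechanism in slightly different bookkeeping.
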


It should be noted that it is an open problem to determine whether the
same conclusion also holds for~$\stisv M$ instead of~$\stsvp M p$.

As in the case of stable integral simplicial volume~\cite[Theorem~6.6,
  Remark~6.7]{loehpagliantini}, stable $\F_p$-simplicial volumes admit
a description in terms of the probability measure preserving action
of~$\pi_1(M)$ on its profinite completion~$\pfc{\pi_1(M)}$ (see
Section~\ref{subsec:dyndef} for the definitions):

\begin{thm}[dynamical view]\label{thm:dynview}
  Let $M$ be an oriented closed connected manifold with residually finite fundamental
  group and let $p \in \N$ be a prime number. Then
  \[ \stsvp M p = \suv M {\pfc{\pi_1(M)};\F_p}.
  \]
\end{thm}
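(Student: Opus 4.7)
The plan is to prove both inequalities separately, following the template for the analogous dynamical description of stable integral simplicial volume in~\cite[Theorem~6.6]{loehpagliantini}, with $\Z$-coefficients replaced throughout by $\F_p$-coefficients. The right-hand side is to be understood as the weightless $\F_p$-simplicial volume computed on the parametrised chain complex built from the universal cover~$\ucov M$ via the probability measure preserving action $\pi_1(M) \actson \pfc{\pi_1(M)}$ with coefficients in the twisted $\F_p$-module~$L^\infty(\pfc{\pi_1(M)};\F_p)$ (Section~\ref{subsec:dyndef}).

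For the inequality $\stsvp M p \geq \suv M {\pfc{\pi_1(M)};\F_p}$, I would start from a finite covering $N \to M$ of degree $d$ which, after passage to its normal closure, corresponds to a finite-index normal subgroup $\Lambda \leq \pi_1(M)$, together with an $\F_p$-fundamental cycle of~$N$ realising~$\svp N p$. Lifting this cycle to $\ucov M$ and symmetrising over the finite quotient~$\pi_1(M)/\Lambda$ yields a $\pi_1(M)$-equivariant parametrised fundamental cycle with coefficients in~$\ell^\infty(\pi_1(M)/\Lambda;\F_p)$. Composing with the canonical projection $\pfc{\pi_1(M)} \to \pi_1(M)/\Lambda$ embeds this cycle into~$L^\infty(\pfc{\pi_1(M)};\F_p)$, and by construction its weightless norm is bounded by~$\svp N p / d$. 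Passing to the infimum over finite coverings gives the asserted inequality.

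For the reverse inequality, I would fix a parametrised $\F_p$-fundamental cycle~$c$ of norm arbitrarily close to~$\suv M {\pfc{\pi_1(M)};\F_p}$. Since $c$ is supported on finitely many simplices (in the simplicial direction) and the coefficient field~$\F_p$ is discrete, the finitely many coefficient functions appearing in~$c$ are locally constant on~$\pfc{\pi_1(M)}$ and hence factor through some finite quotient~$\pi_1(M)/\Lambda$ with~$\Lambda$ a finite-index normal subgroup. Evaluating the coefficient functions at the base coset produces an $\F_p$-fundamental cycle on the corresponding finite cover~$N_\Lambda \to M$, and a direct count shows that its number of simplices divided by~$[\pi_1(M):\Lambda]$ is at most the weightless norm of~$c$, so that $\svp{N_\Lambda}p/[\pi_1(M):\Lambda]$ lies arbitrarily close to the right-hand side.

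The main obstacle is the second direction: because weightless $\F_p$-simplicial volume counts simplices with nonzero coefficient rather than summing weighted contributions, one has to argue that in descending a parametrised cycle to a finite cover its \emph{support} (not merely some $\ell^1$-type quantity) is properly controlled. The key technical point is thus that the symmetrisation step in the first inequality and the evaluation step in the second are mutually compatible at the level of supports, so that the coset bookkeeping respects the weightless seminorm; this is where the argument departs most visibly from its integral sibling and where care is required in combining the locally constant approximation with the equivariant structure.
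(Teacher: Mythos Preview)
Your treatment of the first inequality is essentially the paper's argument: push forward along the projection $\pfc\Gamma \to \Gamma/\Lambda$ and use that the parametrised norm over the finite quotient equals $\svp{M_\Lambda}{p}/[\Gamma:\Lambda]$.

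The second inequality, however, contains a genuine gap. You assert that because $\F_p$ is discrete, the finitely many coefficient functions of a parametrised cycle~$c \in C_n(M;L^\infty(\pfc\Gamma;\F_p))$ are locally constant on~$\pfc\Gamma$ and hence factor through some finite quotient~$\Gamma/\Lambda$. This is false: elements of $L^\infty(\pfc\Gamma;\F_p)$ are \emph{measurable} functions with finite image (up to null sets), not continuous ones. Discreteness of the target forces local constancy only for continuous maps. Concretely, the clopen subsets of~$\pfc\Gamma$ all have rational Haar measure (being finite unions of cosets of finite-index subgroups), so any measurable set of irrational measure gives a characteristic function in $L^\infty(\pfc\Gamma;\F_p)$ that does not agree almost everywhere with any locally constant function. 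Thus a generic parametrised cycle simply does not descend to a finite cover, and your ``evaluation at the base coset'' step is not well-defined.

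The paper closes this gap by an approximation argument rather than an exact factorisation. One shows that the $\Z\Gamma$-submodule $L \subset L^\infty(\pfc\Gamma;\F_p)$ of functions pulled back from finite quotients is dense in the sense that every $f$ can be approximated by some $g \in L$ with $\mu(\supp(f-g))$ arbitrarily small; this uses that the $\sigma$-algebra generated by the clopen sets is all of the Borel $\sigma$-algebra. One then proves that the inclusion $H_n(M;L) \hookrightarrow H_n(M;L^\infty(\pfc\Gamma;\F_p))$ is norm-preserving for the weightless norm, by approximating both cycles and the boundaries witnessing the homology relation. This is precisely the ``support control'' you correctly identify as the crux, but it requires the density machinery rather than the (unavailable) exact descent you propose.
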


We conclude with a brief outlook on the relation with homology
gradients:

\begin{rem}[homology gradients]
  Let $M$ be an oriented closed connected manifold and let $p \in \N$
  be a prime number. Let $(\Gamma_j)_{j \in \N}$ be a descending chain
  of finite index subgroups of~$\pi_1(M)$ and let $(M_j)_{j \in \N}$
  be the corresponding tower of covering manifolds. Then the Betti
  number bound of Proposition~\ref{prop:PD} yields the corresponding
  homology gradient bound
  \[ \limsup_{j \rightarrow \infty} \frac{b_k(M_j;\F_p)}{[\pi_1(M):\Gamma_j]}
  \leq \lim_{j \rightarrow \infty} \frac{\svp {M_j} p}{[\pi_1(M):\Gamma_j]}
  = \inf_{j \in \N} \frac{\svp {M_j} p}{[\pi_1(M):\Gamma_j]}.
  \]
\end{rem}

Similar to the case of homology gradients, we are therefore led to
the following open problems:

\begin{question}[approximation problem]
  Let $M$ be an oriented closed connected manifold with residually
  finite fundamental group.  For which (if any) primes~$p \in \N$ do
  we have
  \[ \stsvp M p = \stsuv M \Q\; ?
  \]
\end{question}

\begin{question}[chain problem]
  Let $M$ be an oriented closed connected manifold with residually
  finite fundamental group, let $p \in \N$ be a prime, let
  $(\Gamma_j)_{j \in \N}$ be a descending chain of finite index
  subgroups of~$\pi_1(M)$, and let $(M_j)_{j \in \N}$ be the associated
  sequence of covering manifolds of~$M$.  How does the value
  \[ \lim_{j \rightarrow \infty} \frac{\svp {M_j} p}{[\pi_1(M):\Gamma_j]}
  \]
  depend on the choice of this chain~$(\Gamma_j)_{j \in \N}$\;?
\end{question}

\begin{question}[Euler characteristic problem]
  Gromov asked the following question~\cite[p.~232]{gromovggt}: Let $M$ be
  an oriented closed connected aspherical manifold~$M$ with~$\sv M = 0$.
  Does this imply~$\chi(M) = 0$\;?

  The Betti number estimate shows that 
  $\bigl|\chi(M)\bigr| \leq (\dim M+1) \cdot \stsvp M p$ for all primes~$p$
  (Proposition~\ref{prop:chi}).

  Hence, we arrive at the following question: Let $M$ be an oriented
  closed connected aspherical manifold with~$\sv M = 0$ (and
  residually finite fundamental group). Does there exist a prime
  number~$p \in \N$ with~$\stsvp M p = 0$\;?
\end{question}

\subsection*{Organisation of this article}

We introduce weightless simplicial volumes in
Section~\ref{sec:wlsimvol} and establish some basic properties,
including a proof of Theorem~\ref{thm:gap}. In
Section~\ref{sec:comparison} we prove the comparison theorem
Theorem~\ref{thm:fpq}. Stable weightless simplicial volumes are
studied in Section~\ref{sec:stable}, where we prove Theorem~\ref{thm:selfmap}
and Theorem~\ref{thm:dynview}.

\section{Weightless simplicial volumes}\label{sec:wlsimvol}

We will introduce weightless simplicial volumes and the special case
of $\F_p$-simplicial volumes. Moreover, we will collect some basic
properties of these invariants.

\subsection{Basic definitions}\label{subsec:basicdefs}

If $R$ is a subring of~$\R$, then the \emph{$R$-simplicial volume} of an oriented
closed connected $n$-manifold~$M$ is defined as
\begin{align*}
  \| M \|_R := \inf \biggl\{ \sum_{j=1}^m |a_j| \biggm| \;
  & \sum_{j=1}^m a_j \cdot \sigma_j \in C_n(M;R)
  \\
  & \text{is an $R$-fundamental cycle of~$M$}\biggr\} \in \R_{\geq 0}.
\end{align*}
Here, $|\cdot|$ denotes the standard absolute value on~$\R$ and $C_n(M;R)$
is the $n$-th chain group of the singular chain complex of~$M$. Classical
simplicial volume, as introduced by Gromov~\cite{vbc}, is $\sv M$.

If $R$ is a commutative ring with unit, in general, we will not
find a norm on~$R$ in the usual sense; but we can always use the trivial
size function
\begin{align*}
  | \cdot | \colon R & \longrightarrow \R_{\geq 0} \\
  x & \longmapsto \begin{cases} 0 & \text{if~$x = 0$}\\
    1 & \text{if~$x \neq 0$}
  \end{cases}
\end{align*}
on the coefficients, 
which leads to the counting ``norm'' on the singular chain complex (thereby forgetting
the weights of the individual simplices):
If $X$ is a topological space and $k\in \N$, then we define 
\begin{align*}
  \biggl| \sum_{j=1}^m a_j \cdot \sigma_j\biggr| := \sum_{j=1}^m |a_j| = m
\end{align*}
for all singular chains~$\sum_{j=1}^m a_j \cdot \sigma_j \in C_k(X;R)$ in reduced form. 
This ``norm'' on the singular chain complex results in the following definition:

\begin{defi}[weightless simplicial volume]\label{def:suv}
  Let $R$ be a commutative ring with unit and let $M$ be an oriented closed connected
  manifold of dimension~$n$.
  Then the \emph{weightless $R$-simplicial volume} of~$M$ is defined as
  \begin{align*}
    \suv M R := \min \biggl\{ m \in \N \biggm|\;
     & \sum_{j=1}^m a_j \cdot \sigma_j \in C_n(M;R)
    \\
     & \text{is an $R$-fundamental cycle of~$M$} \biggr\} 
    \in \N.
  \end{align*}
\end{defi}

If $p \in \N$ is a prime number, then for the ring~$\F_p$ we obtain Definition~\ref{def:svp}.

Of course, we can extend this definition of~$\suv {\args} R$ to all singular
homology classes with $R$-coefficients
by minimising the counting norm over all singular cycles representing the
given homology class.

\begin{rem}\label{rem:trivest}
  Let $R$ be a (non-trivial) commutative ring with unit and let $M$ be
  an oriented closed connected manifold.
  \begin{enumerate}
  \item If $M$ is non-empty, then $\suv M R \neq 0$, whence $\suv M R \geq 1$.
  \item Because every $\Z$-fundamental cycle gives rise to an $R$-fundamental cycle,
    we have
    \[ \suv M R \leq \suv M \Z \leq \isv M.
    \]
  \end{enumerate}
\end{rem}

\subsection{Degrees and domination}\label{subsec:domination}

Weightless simplicial volumes can serve as an obstruction against domination of manifolds.

\begin{prop}[degree monotonicity]\label{prop:degreemon}
  Let $M$ and $N$ be oriented closed connected manifolds of the same dimension and  
  let $f \colon M \longrightarrow N$ be a continuous map.
  \begin{enumerate}
  \item If $R$ is a commutative ring with unit and $\deg f$ is a unit in~$R$, then
    \[ \suv N R \leq \suv M R.
    \]
  \item In particular: If $\deg f \neq 0$, then $\suv N \Q \leq \suv M \Q$.
    If $p \in \N$ is prime and $p \nmid \deg f$, then $\svp N p \leq \svp M p$.
  \end{enumerate}
\end{prop}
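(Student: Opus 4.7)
The plan is to use the standard pushforward argument, taking an economical fundamental cycle on~$M$, pushing it to~$N$ via the chain map~$C_n(f)$, and rescaling by the inverse of the degree in~$R$. Since the counting norm only tracks the number of distinct singular simplices, the key observations are that (a)~pushing forward along~$f$ does not increase this count, and (b)~multiplying by a unit in~$R$ does not kill any nonzero coefficient.

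More precisely, choose an $R$-fundamental cycle
\[
  c = \sum_{j=1}^m a_j \cdot \sigma_j \in C_n(M;R)
\]
of~$M$ in reduced form that realises~$\suv M R = m$. The chain map induced by~$f$ gives
\[
  C_n(f)(c) = \sum_{j=1}^m a_j \cdot (f \circ \sigma_j) \in C_n(N;R),
\]
and by naturality of the fundamental class we have $H_n(f)\bigl([M]_R\bigr) = \deg f \cdot [N]_R$ in~$H_n(N;R)$. Since $\deg f$ is a unit in~$R$, the chain
\[
  z := (\deg f)^{-1} \cdot C_n(f)(c)
\]
is a cycle representing~$[N]_R$, hence an $R$-fundamental cycle of~$N$. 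Its reduced form uses at most~$m$ distinct simplices: multiplying every coefficient by the unit~$(\deg f)^{-1}$ preserves the set of nonzero terms, while collecting repeated simplices among the~$f \circ \sigma_j$ can only decrease the count. Therefore $\suv N R \leq m = \suv M R$, which proves~(1).

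Part~(2) is then a matter of specialising the coefficient ring: for~$R = \Q$, every nonzero integer is a unit, and for~$R = \F_p$, an integer is a unit if and only if~$p \nmid \deg f$. In both cases~(1) applies and yields the asserted estimates $\suv N \Q \leq \suv M \Q$ and $\svp N p \leq \svp M p$.

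There is no real obstacle here; the only subtlety to watch is the bookkeeping with the counting norm, namely confirming that $C_n(f)$ and scaling by a unit cannot increase the number of simplices appearing in the reduced form. As long as this is recorded explicitly, the proof is a direct functoriality argument.
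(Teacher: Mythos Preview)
Your proof is correct and essentially identical to the paper's: both push forward a minimal $R$-fundamental cycle of~$M$ along~$f$, rescale by~$(\deg f)^{-1}$, and observe that the resulting $R$-fundamental cycle of~$N$ has at most as many simplices. You are in fact slightly more explicit than the paper about why the counting norm cannot increase (collisions among the~$f \circ \sigma_j$ and unit-scaling preserving nonzero coefficients), but the argument is the same.
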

\begin{proof}
  Let $c = \sum_{j=1}^k a_j \cdot \sigma_j \in C_n(M;R)$ be an $R$-fundamental
  cycle that is in reduced form. Because $\deg f$ is a unit in~$R$, the chain
  \[ c' := \sum_{j=1}^k \frac 1 {\deg f} \cdot a_j \cdot f \circ \sigma_j \in C_n(N;R)
  \]
  is an $R$-fundamental cycle of~$N$. In particular,
  \[ \suv N R \leq |c'| \leq k = |c|.
  \]
  Taking the minimum over all $R$-fundamental cycles~$c$ of~$M$ shows
  that $\suv N R \leq \suv M R$.
  
  The second part is a special case of the first part.
\end{proof}

\begin{defi}[domination]\label{def:dom}
  Let $M$ and $N$ be oriented closed connected manifolds of the same dimension.
  The manifold~$M$ \emph{dominates}~$N$ if there exists a map~$M \longrightarrow N$
  of non-zero degree.
\end{defi}

\begin{cor}
  Let $M$ and $N$ be oriented closed connected manifolds of the same dimension
  and let $F$ be a field of characteristic~$0$. If $M$ dominates~$N$, then
  \[ \suv M F \geq \suv N F.
  \]
\end{cor}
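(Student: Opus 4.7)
The plan is to deduce this directly from Proposition~\ref{prop:degreemon}(1), so the argument is essentially a one-line observation.

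Since $M$ dominates $N$, by Definition~\ref{def:dom} there exists a continuous map $f \colon M \longrightarrow N$ with $d := \deg f \neq 0$. The key point is to verify that $d$ is a unit in $F$. Because $F$ has characteristic zero, the unique unital ring homomorphism $\Z \longrightarrow F$ is injective; in particular, the image of the non-zero integer $d$ is a non-zero element of the field $F$ and hence a unit.

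Having established that $\deg f$ is a unit in $F$, I would then apply Proposition~\ref{prop:degreemon}(1) with $R = F$ to the map~$f$, which immediately gives $\suv N F \leq \suv M F$, as desired.

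There is no real obstacle here; the only conceptual content is the observation that, in characteristic zero, the non-vanishing of the degree automatically upgrades to invertibility in the coefficient field, which is exactly the hypothesis needed for the general degree monotonicity in Proposition~\ref{prop:degreemon}(1).
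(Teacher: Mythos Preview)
Your proof is correct and follows exactly the same approach as the paper: the corollary is obtained directly from Proposition~\ref{prop:degreemon} by noting that a non-zero integer is a unit in any field of characteristic~$0$. The paper states this in one line as a ``reformulation of Proposition~\ref{prop:degreemon}, using the domination terminology''; you merely spell out the invertibility step in a bit more detail.
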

\begin{proof}
  This is merely a reformulation of Proposition~\ref{prop:degreemon}, using
  the domination terminology.
\end{proof}

\subsection{Homology bounds and some examples}\label{subsec:hombounds}

We will now explain how homology bounds in terms of weightless
simplicial volumes can be used to compute first examples. 

\begin{prop}[homology bounds]\label{prop:PD}
  Let $M$ be an oriented closed connected manifold and let $R$ be a
  commutative ring with unit.
  \begin{enumerate}
  \item For all~$\alpha \in H_*(M;R)$ we have~$\suv \alpha R \leq \suv M R$.
  \item
    If $R$ is a principal ideal domain and $k \in \N$, then 
    \[ \rk_R H_k(M;R) \leq \suv M R.
    \]
  \end{enumerate}
\end{prop}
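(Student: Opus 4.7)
The plan is to fix a minimal $R$-fundamental cycle and, by taking cap products with it, propagate the bound from fundamental cycles down to arbitrary homology classes. Concretely, choose $c = \sum_{j=1}^m a_j \cdot \sigma_j \in C_n(M;R)$ in reduced form with $m = \suv M R$ that represents the $R$-fundamental class~$[M]_R$.

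For part~(1), let $\alpha \in H_k(M;R)$. Poincar\'e duality for the oriented closed connected manifold~$M$ (valid over any commutative unital~$R$) provides a class $\varphi \in H^{n-k}(M;R)$ with $\alpha = \varphi \cap [M]_R$. Computing the cap product at the chain level via the standard front-face/back-face formula gives
\[ \varphi \cap c = \sum_{j=1}^m a_j \cdot \varphi\bigl(\fface{n-k}{\sigma_j}\bigr) \cdot \bface{k}{\sigma_j} \in C_k(M;R), \]
which is a cycle representing~$\alpha$ whose reduced form involves at most~$m$ singular simplices. Hence $\suv \alpha R \leq m = \suv M R$.

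For part~(2), the idea is to refine the previous observation by tracking the image of the chain-level cap product with~$c$. Let $F \subseteq C_k(M;R)$ be the $R$-submodule spanned by the back $k$-faces $\bface{k}{\sigma_1}, \dots, \bface{k}{\sigma_m}$; it admits a generating set of size at most~$m$. The displayed formula shows that the chain-level operator $\args \cap c \colon C^{n-k}(M;R) \to C_k(M;R)$ takes values in~$F$. Because $c$ is a cycle, the Leibniz rule forces cocycles to go to cycles and coboundaries to boundaries, so the Poincar\'e duality isomorphism $H^{n-k}(M;R) \to H_k(M;R)$ factors through the image of~$F \cap Z_k(M;R)$ in~$H_k(M;R)$. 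Therefore $H_k(M;R)$ is a quotient of a submodule of the finitely generated $R$-module~$F$; since $R$ is a principal ideal domain, rank is monotone under such subquotients, yielding $\rk_R H_k(M;R) \leq \rk_R F \leq m$.

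The main obstacle is essentially bookkeeping: one has to match front/back face and sign conventions so that $\varphi \cap c$ is genuinely a cycle representing~$\alpha$, and invoke the standard fact that the rank of a subquotient of a finitely generated module over a PID cannot exceed the rank of the ambient module. No conceptual input beyond classical Poincar\'e duality is required.
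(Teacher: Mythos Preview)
Your proposal is correct and follows essentially the same approach as the paper: both arguments fix a minimal $R$-fundamental cycle, use the explicit chain-level cap product formula to see that every homology class is represented by a cycle supported on at most~$m$ singular simplices, and then for part~(2) observe that $H_k(M;R)$ is a subquotient of a module with at most~$m$ generators over a PID. The only cosmetic difference is your choice of front/back face convention, which you already flag as bookkeeping.
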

\begin{proof}
  Like all results of this type, this is based on exploiting the
  explicit description of the Poincar\'e duality map.

  Let $n := \dim M$,
  let $c = \sum_{j=1}^m a_j \cdot \sigma_j \in C_n(M;R)$ be an $R$-fundamental cycle of~$M$
  with $m = \suv M R$,
  and let $k \in \N$. Then the Poincar\'e duality map
  \begin{align*}
    \args \cap [M]_R \colon H^{n-k}(M;R) & \longrightarrow H_k(M;R)
    \\
      [f] & \longmapsto
      (-1)^{(n-k) \cdot k} \cdot 
      \biggl[ \sum_{j=1}^m a_j \cdot f(\bface {n-k} {\sigma_j}) \cdot \fface k {\sigma_j} 
      \biggr]
  \end{align*}
  is an isomorphism of $R$-modules. Clearly, the elements on the right hand side
  have weightless norm at most~$m = |c| = \suv M R$. This proves the first part.

  Moreover, the $R$-module~$H_k(M;R)$ is a quotient of a submodule of
  an $R$-module that is generated by $m$~elements. If $R$ is a
  principal ideal domain, this implies~$\rk_R H_k(M;R) \leq m$.
\end{proof}

\begin{exa}[odd-dimensional projective spaces]\label{exa:diffp}
  Let $n \in \N$ be odd. 
  If $p \in \N$ is an odd prime, then
  \[ \svp {\R P^n} p
     = 1
  \]
  because $\R P^n$ is dominated by~$S^n$ through a map of degree~$2$ and $2$
  is invertible in~$\F_p$ (Proposition~\ref{prop:degreemon}).

  In contrast, 
  \[ \svp {\R P^n} 2 = 2.
  \]
  Indeed, on the one hand, $\svp {\R P^n} 2 \leq \isv {\R P^n} =
  2$~\cite[Proposition~4.4]{loehsmallisv} (the cited argument
  also generalises to odd dimensions bigger than~$3$). On the other hand, let us
  assume for a contradiction that $\svp {\R P^n} 2 = 1$. Then the
  Poincar\'e duality argument from Proposition~\ref{prop:PD} shows
  that the non-trivial class in~$H_2(\R P^n;\F_2) \cong \F_2$ can be represented
  by a single singular $2$-simplex; however, a single singular
  $2$-simplex cannot be an $\F_2$-cycle (for parity reasons). This
  contradiction shows that~$\svp {\R P^n} 2 = 2$.
\end{exa}

\begin{prop}[minimal weightless simplicial volume]\label{prop:minimal}
  Let $M$ be an oriented closed connected manifold and let $n := \dim M$.
  \begin{enumerate}
  \item Then $\suv M \Q = 1$ if and only if $n$ is odd and $M$ is dominated by~$S^n$.
  \item Then $\suv M \Z = 1$ if and only if $n$ is odd and $M \simeq S^n$.
  \end{enumerate}
\end{prop}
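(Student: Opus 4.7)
My plan has three parts: the easy backward implications (via degree monotonicity), a parity argument showing $n$ must be odd, and the construction of a sphere map from a single-simplex fundamental cycle.

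For the backward implications: when $n$ is odd, the standard quotient $q \colon \Delta^n \to S^n = \Delta^n / \partial \Delta^n$ is itself a single-simplex $\Z$-fundamental cycle, because all $n+1$ of its faces coincide with the constant simplex at the collapsed point and $\sum_{i=0}^n (-1)^i = 0$ for odd $n$. Hence $\suv{S^n}{\Z} = \suv{S^n}{\Q} = 1$. For~(1), degree monotonicity (Proposition~\ref{prop:degreemon}) applied to a degree-non-zero map $S^n \to M$ gives $\suv{M}{\Q} \le 1$, and Remark~\ref{rem:trivest} gives the matching lower bound. For~(2), a homotopy equivalence $M \simeq S^n$ pushes a $\Z$-fundamental cycle of $S^n$ to one of $M$ without increasing the counting norm, so $\suv{M}{\Z} = 1$.

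For the forward implications, suppose $m \cdot \sigma$ is an $R$-fundamental cycle for some single simplex $\sigma \colon \Delta^n \to M$ and $m \in R \setminus \{0\}$. The chain identity $\partial \sigma = \sum_{i=0}^n (-1)^i \sigma \circ d_i = 0$ partitions $\{0,\dots,n\}$ by the relation $i \sim j \iff \sigma \circ d_i = \sigma \circ d_j$; each equivalence class $C$ must satisfy $\sum_{i \in C} (-1)^i = 0$, forcing $|C|$ to be even, so $n+1$ is even and $n$ is odd. To produce a sphere map, I form the quotient $P_\sigma = \Delta^n / {\sim}$, identifying $d_i(x) \sim d_j(x)$ for all $x \in \Delta^{n-1}$ whenever $i \sim j$, so that $\sigma$ descends to $\bar\sigma \colon P_\sigma \to M$. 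The space $P_\sigma$ is a CW-complex with exactly one $n$-cell attached to $P_\sigma^{(n-1)}$; the induced face identifications make $P_\sigma^{(n-1)}$ contractible, yielding $P_\sigma \simeq \Delta^n / \partial \Delta^n = S^n$. The composition $S^n \simeq P_\sigma \xrightarrow{\bar\sigma} M$ then has degree $m^{-1} \neq 0$, proving part~(1). For part~(2), in the $\Z$-case the equality $m \cdot [\sigma] = [M]_\Z$ in $H_n(M;\Z) = \Z$ forces $m = \pm 1$, so we obtain $f \colon S^n \to M$ of degree $\pm 1$; standard arguments (surjectivity of $\pi_1$ and of $H_\ast$ under a degree-$1$ map of closed oriented manifolds) yield $M$ simply connected with $H_\ast(M) \cong H_\ast(S^n)$, whence $M \simeq S^n$ by Whitehead.

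The main obstacle will be verifying that $P_\sigma^{(n-1)}$ is contractible for the general pattern of face identifications; the identifications on lower-dimensional cells are induced from the top-face identifications and can create unexpected structure that needs a careful combinatorial analysis. If this analysis is delicate, an alternative route is to argue directly: since $\sigma|_{\partial \Delta^n}$ is null-homologous (its chain representation is exactly $\partial \sigma = 0$), one may homotope $\sigma$ to a map constant on $\partial \Delta^n$, thereby factoring it through $S^n$ and producing the required sphere map.
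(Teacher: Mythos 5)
Your backward implications, the parity argument forcing $n$ odd, and the reduction $m \in \{-1,1\}$ in the $\Z$-case are all fine and agree in spirit with the paper. But the heart of the forward direction is exactly the step you leave open. The paper does not prove this part from scratch: it observes that $\suv M \Q = 1$ (resp.\ $\suv M \Z = 1$) yields a single singular simplex representing $m \cdot [M]_\Z$ with $m \neq 0$ (resp.\ $\isv M = 1$) and then quotes \cite[Theorem~3.2]{loehsmallisv} and \cite[Theorem~1.1]{loehsmallisv} for ``$n$ odd and dominated by~$S^n$'' resp.\ ``$n$ odd and $M \simeq S^n$''. Your proposal tries to re-prove that input via the quotient complex $P_\sigma = \Delta^n/\!\sim$, and the decisive claim --- that the induced identifications make $P_\sigma^{(n-1)}$ contractible, so that $P_\sigma \simeq \Delta^n/\partial\Delta^n = S^n$ --- is only asserted. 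You flag it yourself as ``the main obstacle'', and indeed it is the entire content of the statement: equivalence classes of faces can have any even size, the identifications descend to all lower-dimensional faces in a nontrivial combinatorial pattern, and nothing in your sketch controls the homotopy type of the resulting $(n-1)$-skeleton (one would at least have to analyse possible self-identifications of cells, compute $\pi_1$ and the homology of the skeleton, etc.). As written, this is a gap, not a proof.

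Your proposed fallback does not close it: from $\partial\sigma = 0$ as a singular \emph{chain} you conclude that $\sigma|_{\partial\Delta^n}$ is ``null-homologous'' and that one may therefore homotope $\sigma$ to be constant on $\partial\Delta^n$. That implication is false in general. Vanishing of the chain $\partial\sigma$ only says that the face maps coincide in sign-cancelling pairs; it does not make the restriction $\sigma|_{\partial\Delta^n} \colon S^{n-1} \to M$ null-homotopic (the relevant obstruction lives in $\pi_{n-1}(M)$, which chain-level cancellation does not see), and without a null-homotopy there is no factorisation through $S^n$. So the only complete routes available to you are either to carry out the combinatorial analysis of $P_\sigma$ in full, or to do as the paper does and invoke the results of \cite{loehsmallisv}. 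Everything else in your write-up (the wrap-around cycle for odd spheres with constant faces, degree monotonicity for the easy directions, and the standard ``degree-one map from $S^n$ forces a homotopy sphere'' argument in part~(2), which the paper replaces by citing $\isv M = 1 \Rightarrow M \simeq S^n$) is correct.
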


\begin{proof}
  If $n$ is odd, then $\isv {S^n} = 1$ (we can wrap~$\Delta^n$ around~$S^n$
  with constant face maps). Hence: If $M \simeq S^n$, then $\suv M \Z = 1$.
  If $M$ is dominated by~$S^n$, then $1 \leq \suv M \Q \leq \suv {S^n} \Q =1$.

  Conversely, if $\suv M\Q =1$, then there is a singular
  simplex~$\sigma \colon \Delta^n \longrightarrow M$ that is a cycle
  that represents a non-zero homology class in~$H_n(M;\Q)$ and
  in~$H_n(M;\Z)$.  In particular, there is an~$m \in \Z \setminus
  \{0\}$ with~$\| m \cdot [M]_\Z\|_{\Z} = 1$. It is known that this
  implies that $n$ is odd and that $M$ is dominated
  by~$S^n$~\cite[Theorem~3.2]{loehsmallisv}.

  Similarly, if $\suv M\Z = 1$, then there is a singular
  simplex~$\sigma \colon \Delta^n \longrightarrow M$ and an~$m \in \Z
  \setminus \{0\}$ such that $m \cdot \sigma$ is a fundamental cycle
  of~$M$. Because $[m \cdot \sigma] = [M]_\Z$ is a generator
  of~$H_n(M;\Z) \cong \Z$, we obtain~$m \in \{-1,1\}$. Therefore, $\isv M = 1$,
  which implies that $n$ is odd and $M \simeq S^n$~\cite[Theorem~1.1]{loehsmallisv}.
\end{proof}

We will now establish Theorem~\ref{thm:gap}, by proving the following generalisation:

\begin{thm}\label{thm:gapgen}
  Let $F$ be a field. 
  For every~$K \in \N$ there exists an oriented closed connected $3$-manifold with
  \[ \isv M > K \cdot \suv M F.  
  \]
\end{thm}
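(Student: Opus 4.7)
My plan is to take $M$ to be a lens space $L(p,1) = S^3 / (\Z/p)$ for a suitably large prime~$p$ that is invertible in~$F$. The intuition is that the quotient map $S^3 \to L(p,1)$ will keep the weightless $F$-simplicial volume small, while the torsion class $\Z/p$ in $H_1(L(p,1);\Z)$ should force integral fundamental cycles to encode the integer~$p$, which is expensive in terms of the number of distinct singular simplices.

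First I would verify the upper bound. Fix a prime $p \in \N$ with $p \neq \mathrm{char} F$, so that $p$ is a unit in~$F$; the $p$-fold covering $\pi \colon S^3 \to L(p,1)$ has degree~$p$. The manifold $S^3$ admits a singular fundamental cycle consisting of a single $3$-simplex whose boundary collapses to a point, so $\suv{S^3}{F} = 1$; combined with Proposition~\ref{prop:degreemon}, this gives $\suv{L(p,1)}{F} \leq 1$, and since $L(p,1)$ is non-empty we obtain equality by Remark~\ref{rem:trivest}.

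The heart of the proof is the lower bound $\isv{L(p,1)} \to \infty$ as $p \to \infty$. Given an integral fundamental cycle $c = \sum_{i=1}^N \varepsilon_i \sigma_i$ of $L(p,1)$ with $\varepsilon_i \in \{\pm 1\}$ and $N = \isv{L(p,1)}$, I would assemble the $\sigma_i$'s into a finite $3$-dimensional $\Delta$-complex~$X$ with $N$ top-dimensional cells and $O(N)$ cells in each lower dimension, together with a continuous map $f \colon X \to L(p,1)$ carrying a natural fundamental cycle of $X$ to~$c$. The classical cup product structure on $H^*(L(p,1);\F_p)$ gives $\alpha \cup \beta(\alpha) \neq 0$ in $H^3(L(p,1);\F_p)$, where $\alpha$ generates $H^1$ and $\beta$ is the Bockstein; Kronecker duality, together with $f_*[X]_\Z = [L(p,1)]_\Z$, makes $f^*$ injective on $H^3(\,\cdot\,; \F_p)$, so $f^*(\alpha \cup \beta(\alpha)) \neq 0$, which forces $\beta(f^*\alpha) \neq 0$ in $H^2(X;\Z)$. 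Universal coefficients then produce a class of order~$p$ in the torsion subgroup of $H_1(X;\Z)$. On the other hand, the simplicial boundary matrices of $X$ have entries of bounded magnitude, so Hadamard's inequality bounds the order of the torsion subgroup of $H_1(X;\Z)$ by $(cN)^{cN}$ for some absolute constant~$c$; combining, $p \leq (cN)^{cN}$, whence $N$ grows at least like $\log p / \log\log p$ and in particular tends to infinity with~$p$.

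The main obstacle is a clean realisation of the $\Delta$-complex $X$ and the map $f$, especially in the presence of degenerate or coincident singular simplices: a preliminary normalisation of the fundamental cycle, or a careful pairing of faces with cancelling coefficients in the boundary relation $\partial c = 0$, should take care of this. One could alternatively bypass the combinatorial analysis by invoking existing estimates on the Matveev complexity of lens spaces. With the lower bound in hand, given $K \in \N$ one chooses a prime $p$ coprime to $\mathrm{char} F$ with $\isv{L(p,1)} > K$; then $M := L(p,1)$ satisfies $\isv M > K = K \cdot \suv M F$, as required.
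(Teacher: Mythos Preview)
Your approach is correct and starts from the same example as the paper: both use the lens spaces~$L(n,1)$ with $n$ invertible in~$F$, and the upper bound $\suv{L(n,1)}{F}\le 1$ is obtained exactly as you describe, via the degree-$n$ covering $S^3\to L(n,1)$ and Proposition~\ref{prop:degreemon}.

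The two arguments diverge in how they show $\isv{L(n,1)}\to\infty$. The paper simply observes that the $L(n,1)$ are pairwise non-homeomorphic and invokes a black-box finiteness theorem \cite[Proposition~5.3]{loehsmallisv}: for each~$K$ there are only finitely many oriented closed connected $3$-manifolds with $\isv{\,\cdot\,}\le K$, so the sequence $(\isv{L(n,1)})_n$ must be unbounded. Your argument instead unpacks a version of this finiteness mechanism by hand: you realise a minimal integral fundamental cycle as a finite $\Delta$-complex~$X$ with a map to $L(p,1)$, pull back the nontrivial Bockstein class to force $p$-torsion in $H_1(X;\Z)$, and then bound the torsion order by Hadamard's inequality, yielding an explicit estimate of the shape $\isv{L(p,1)}\gtrsim \log p/\log\log p$. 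This is more work --- in particular you are right that the face-pairing construction of~$X$ needs some care, though it is standard --- but it is self-contained and quantitative, whereas the paper's route is a two-line citation. Your remark that one could alternatively quote known lower bounds on Matveev complexity is in the same spirit as the paper's appeal to an external finiteness result.

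One small point of presentation: when you write the minimal integral cycle as $\sum_{i=1}^N \varepsilon_i\sigma_i$ with $\varepsilon_i\in\{\pm 1\}$ and $N=\isv{L(p,1)}$, the $\sigma_i$ need not be distinct (the definition of $\isv{\,\cdot\,}$ minimises $\sum|a_j|$, not the number of distinct simplices); this is harmless for your construction of~$X$, since you treat each~$\sigma_i$ as a separate $3$-cell anyway, but it is worth saying explicitly.
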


\begin{proof}
  Let $p$ be the characteristic of~$F$ and let $N := \N \setminus p \cdot \N$. 
  For~$n \in \N$, we write~$L(n,1)$ for the associated $3$-di\-men\-sion\-al lens space.
  Then $L(n,1)$ is covered, whence dominated, by~$S^3$ through a map of degree~$n$;
  therefore, for all~$n \in N$ we obtain
  \[ \suv[big] {L(n,1)} F \leq \suv {S^3} F \leq \isv {S^3} = 1
  \]
  (by Proposition~\ref{prop:degreemon}).
  
  On the other hand, the manifolds in the
  sequence~$\bigl(L(n,1)\bigr)_{n \in N}$ are pairwise
  non-homeomorphic. Therefore, the sequence~$\bigl( \isv
  {L(n,1)}\bigr)_{n \in N}$ is
  unbounded by the finiteness result for the integral simplicial volume~$\isv \args$ on
  $3$-manifolds~\cite[Proposition~5.3]{loehsmallisv}.
\end{proof}

Classical simplicial volume (with real coefficients) is known to behave
well with respect to products~\cite{vbc}. For integral simplicial
volume of a product manifold, there is an upper bound in terms of the
products of the integral simplicial volumes of the factors; however,
it is unknown whether the integral simplicial volume also satisfies a
corresponding estimate from below. 

\begin{prop}[product estimate]
  Let $M$ and $N$ be oriented closed connected manifolds and let $R$
  be a commutative ring with unit.  Then
  \[ \max \bigl(\suv M R, \suv N R \bigr)
  \leq \suv {M \times N} R
  \leq {\dim M + \dim N \choose \dim M} \cdot \suv M R \cdot \suv N R.
  \]
\end{prop}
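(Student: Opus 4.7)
The plan is to prove the two inequalities independently. Write $m := \dim M$ and $n := \dim N$.

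For the upper bound, I would use the Eilenberg--Zilber cross product. Choose reduced fundamental cycles $c_M \in C_m(M;R)$ and $c_N \in C_n(N;R)$ realising $\suv M R$ and $\suv N R$, respectively. The shuffle map sends the tensor $\sigma_M \otimes \sigma_N$ of a $p$-simplex and a $q$-simplex to a signed sum over $(p,q)$-shuffles, producing exactly $\binom{p+q}{p}$ singular $(p+q)$-simplices of $M \times N$. Since the cross product sends $[M]_R \otimes [N]_R$ to $[M \times N]_R$, the chain $c_M \times c_N$ is a fundamental cycle of $M \times N$ whose counting norm is at most $\binom{m+n}{m} \cdot |c_M| \cdot |c_N|$.

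For the lower bound, by symmetry it is enough to show $\suv M R \leq \suv{M \times N} R$. My plan is to apply a chain-level slant product. First I would observe that Poincar\'e duality (or a direct universal-coefficient calculation, using that $H_{n-1}(N;\Z)$ is torsion-free) implies $H_n(N;R) \cong R$ freely generated by $[N]_R$, so that there is a class $[N]^* \in H^n(N;R)$ with $\langle [N]^*, [N]_R \rangle = 1$; pick any cocycle representative $\phi \in C^n(N;R)$. I would then work with the chain map
\[
\args / \phi \colon C_{m+n}(M \times N;R) \longrightarrow C_m(M;R), \qquad \sigma \longmapsto \phi\bigl(\fface{n}{p_N \circ \sigma}\bigr) \cdot \bface{m}{p_M \circ \sigma},
\]
where $p_M, p_N$ denote the two projections. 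The slant/cross adjunction, combined with $[M]_R \times [N]_R = \pm [M \times N]_R$, yields $[M \times N]_R / [N]^* = \pm [M]_R$ on homology. Hence for any reduced fundamental cycle $c$ of $M \times N$ with $|c| = \suv{M \times N} R$, the chain $\pm c / \phi$ is a fundamental cycle of $M$ consisting of at most $|c|$ singular simplices (some coefficients may vanish, but no new simplices appear).

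The only potentially delicate point is the construction of the dual class $[N]^*$ over an arbitrary commutative unital ring $R$, which is handled by the observation above. Beyond that, the argument is an appeal to standard chain-level properties of the Eilenberg--Zilber shuffle and of the slant product; the bookkeeping of signs is routine and does not affect the counting-norm estimates, since the size function on $R$ is insensitive to signs.
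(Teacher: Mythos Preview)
Your upper-bound argument via the shuffle product is exactly what the paper does.

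For the lower bound, your slant-product argument is correct, but the paper takes a different and somewhat shorter route. Instead of slanting against a dual class~$[N]^*$, the paper fixes a point~$y \in N$, considers the inclusion~$i \colon M \hookrightarrow M \times \{y\} \hookrightarrow M \times N$ and the projection~$p \colon M \times N \to M$, and applies Proposition~\ref{prop:PD}(1) (the Poincar\'e-duality bound on~$M \times N$) to the class~$\alpha := H_m(i;R)[M]_R$:
\[
\suv M R = \suv[big]{H_m(p;R)(\alpha)} R \leq \suv \alpha R \leq \suv{M \times N} R.
\]
This avoids constructing~$[N]^*$ and the accompanying universal-coefficient check over an arbitrary~$R$; instead it reuses the already-established cap-product estimate. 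Your approach, on the other hand, is self-contained and does not rely on Proposition~\ref{prop:PD}. Both methods ultimately rest on an Alexander--Whitney-type face decomposition (cap product in the paper, slant product in your version), so the difference is more one of packaging than of substance.
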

\begin{proof}
  The upper estimate can be shown via the explicit description of the
  homological cross product on the level of singular chains through the
  shuffle product; this is similar to the case of ordinary simplicial
  volume~\cite[Theorem~F.2.5]{benedettipetronio}.

  For the lower estimate, we argue as follows: Let $y \in N$ be a
  point and let $i \colon M \longrightarrow M \times \{y\}
  \longrightarrow M \times N$ and $p \colon M \times N \longrightarrow
  M$ be the corresponding inclusion and projection, respectively; moreover,
  we consider
  \[ \alpha := H_{\dim M}(i ;R) [M]_R \in H_{\dim M}(M \times N;R).
  \]
  Using Proposition~\ref{prop:PD}, we obtain
  \begin{align*}
    \suv M R
    = \suv[big] {H_{\dim M}(p;R) (\alpha)} R
    \leq \suv \alpha R
    \leq \suv {M \times N} R.
  \end{align*}
  In the same way, we also have~$\suv N R \leq \suv {M \times N} R$.
\end{proof}

Using the Betti number estimate, we can also give an answer to the
weight problem (Question~\ref{q:weight}) in a very simple case: 

\begin{prop}\label{prop:bettieq}
  Let $M$ be an oriented closed connected manifold and let $k \in \N$.
  Then $b_k(M;\Z) = \isv M$ if and only if $b_k(M;\Z) = \suv M \Z$. 
\end{prop}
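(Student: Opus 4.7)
The plan is to sandwich $b_k(M;\Z)$ between $\suv M \Z$ and $\isv M$. The Betti number bound from Proposition~\ref{prop:PD} (applied to the PID~$R = \Z$) together with Remark~\ref{rem:trivest} immediately gives
\[ b_k(M;\Z) \leq \suv M \Z \leq \isv M. \]
This already settles one direction: if $b_k(M;\Z) = \isv M$, then equality is forced throughout the sandwich, so $b_k(M;\Z) = \suv M \Z$.

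For the converse, I would assume $b_k(M;\Z) = \suv M \Z =: m$ and pick an integral fundamental cycle $c = \sum_{j=1}^m a_j \cdot \sigma_j \in C_n(M;\Z)$ (in reduced form) realising the minimum, where $n := \dim M$. Setting $v_j := [\fface k {\sigma_j}] \in H_k(M;\Z)$, the explicit formula for the Poincar\'e duality isomorphism recalled in the proof of Proposition~\ref{prop:PD} shows that the image of each class~$[f] \in H^{n-k}(M;\Z)$ lies in the $\Z$-submodule of $H_k(M;\Z)$ generated by $\{a_j \cdot v_j\}_{j=1}^m$. Since Poincar\'e duality is surjective onto $H_k(M;\Z)$, these $m$ elements already generate $H_k(M;\Z)$ as a $\Z$-module.

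Projecting to the free quotient $H_k(M;\Z)/\mathrm{tors} \cong \Z^m$, we get $m$ generators of $\Z^m$, which must therefore form a $\Z$-basis. Expressing the images $\bar v_j$ in the standard basis by a matrix $B \in \Z^{m \times m}$, the change-of-basis matrix $B \cdot \mathrm{diag}(a_1, \ldots, a_m)$ lies in $\GL_m(\Z)$; taking determinants yields $\pm 1 = \det(B) \cdot a_1 \cdots a_m$ with all factors integral, which forces $|a_j| = 1$ for every $j$. Hence $\isv M \leq \sum_{j=1}^m |a_j| = m = b_k(M;\Z)$, and together with the sandwich this gives $\isv M = b_k(M;\Z)$, as required.

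The main obstacle, conceptually, is to upgrade the rank bound on $H_k(M;\Z)$ into a bound on the \emph{integer coefficients} of a minimal fundamental cycle. The key observation is that tightness in the Poincar\'e duality bound turns the scaled front $k$-faces $(a_j \cdot v_j)_{j=1}^m$ into a $\Z$-basis of the free part of $H_k(M;\Z)$, which by the determinant computation is only possible when the coefficients $a_j$ are units in $\Z$; the remainder is just bookkeeping of the sandwich.
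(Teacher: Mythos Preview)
Your forward direction is fine and matches the paper. The converse, however, has a real gap: you set $v_j := [\fface k{\sigma_j}] \in H_k(M;\Z)$, but a single front $k$-face of a singular $n$-simplex is almost never a cycle, so this homology class is not defined. Consequently, the decomposition of the Poincar\'e duality image as a $\Z$-linear combination of the classes~$a_j v_j$ is not available, and the determinant argument that follows has nothing to stand on.

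The gap can be repaired, but it takes work you have not done. One has to argue at the chain level: the Poincar\'e duality cycles land in the subgroup~$A := \bigoplus_j a_j\Z \cdot \fface k{\sigma_j}$ of~$V := \bigoplus_j \Z \cdot \fface k{\sigma_j} \subset C_k(M;\Z)$, and surjectivity of Poincar\'e duality forces $A \cap Z_k(M;\Z) \to H_k(M;\Z)$ to be onto. A rank count then shows first that the front faces are pairwise distinct, and second that $V \cap Z_k$ has full rank~$m$ inside the free group~$V$, which forces $\partial|_V = 0$; only at this point are the $v_j$ genuine cycles. From there your determinant argument goes through. The paper sidesteps all of this with a much shorter manoeuvre: if some~$|a_j| > 1$, pick a prime~$p \mid a_j$ and reduce~$c$ modulo~$p$; the resulting $\F_p$-fundamental cycle has at most~$m-1$ simplices, so $b_k(M;\Z) \leq b_k(M;\F_p) \leq \svp M p \leq m-1 < m$, a contradiction. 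This avoids any discussion of whether individual faces are cycles.
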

\begin{proof}
  The Betti number estimate (Proposition~\ref{prop:PD}) and the universal
  coefficient theorem imply that  
  \[ b_k(M;\Z) \leq b_k(M;\F_p) \leq \svp M p \leq \suv M \Z \leq \isv M. 
  \]
  for all primes~$p \in \N$. Hence, if $b_k(M;\Z) = \isv M$, then
  $b_k(M;\Z) = \suv M \Z$ (and also $b_k(M;\Z) = \svp M p$ for all primes~$p$).

  Conversely, let us suppose that $b_k(M;\Z) = \suv M \Z$. Let $n := \dim M$ and let 
  $c = \sum_{j=1}^m a_j \cdot \sigma_j \in C_n(M;\Z)$ be a $\Z$-fundamental
  cycle of~$M$ that satisfies~$m = \suv M \Z$. \emph{Assume} for a contradiction
  that there exists a~$j \in \{1,\dots,m\}$ with~$a_j \not\in \{-1,1\}$. Then
  there is a prime number~$p \in \N$ that divides~$a_j$. Hence, the term~$a_j \cdot \sigma_j$
  vanishes after reduction modulo~$p$ and so
  \[ b_k(M ;\Z) \leq \svp M p \leq m - 1 < m = \suv M \Z = b_k(M;\Z), 
  \]
  which is a contradiction. This shows that $a_1,\dots, a_m \in \{-1,1\}$.
  Therefore, we have
  \[ \isv M \leq |c|_1 = m = \suv M \Z \leq \isv M,
  \]
  and thus~$\isv M = \suv M \Z$.
\end{proof}

\begin{exa}[surfaces]\label{exa:surface}
  Let $\Sigma$ be an oriented closed connected (non-empty) surface
  and let $p \in \N$ be a prime.

  If $\Sigma \cong S^2$, then $\svp \Sigma p \geq 2$ (for parity reasons)
  and $\svp \Sigma p \leq 2$ (straightforward construction). Hence,
  $\svp {S^2}p = 2$. Similarly, $\suv {S^2} R =2$ for every non-trivial
  commutative ring~$R$ with unit.

  We will now consider the case~$\Sigma \not\simeq S^2$. Let $g \in \N$
  denote the genus of~$\Sigma$. Because $\isv \Sigma = 4 \cdot g -2$~\cite[Proposition~4.3]{loehsmallisv},
  we obtain the upper estimate
  \[ \svp \Sigma p \leq \suv \Sigma \Z \leq \isv \Sigma = 4 \cdot g -2.
  \]
  On the other hand, the Betti number estimate (Proposition~\ref{prop:PD})
  yields
  \[ 2 \cdot g = b_1(\Sigma;\F_p) \leq \svp \Sigma p
  \quad\text{and}\quad
     2 \cdot g = b_1(\Sigma;\Z) \leq \suv \Sigma\Z.
  \]
  In particular, $\svp {S^1 \times S^1} p = 2 = \suv {S^1 \times S^1} \Z$.
  However, if $g\geq 2$, the exact values of~$\svp \Sigma p$ or $\suv \Sigma
  \Z$ are not known.

  For the weightless simplicial volume with $\Z$-coefficients, we can
  at least improve the bound $2 \cdot g \leq \suv \Sigma \Z$ to a
  strict inequality:
  \emph{Assume} for a contradiction that $2 \cdot g = \suv \Sigma \Z$.
  Then Proposition~\ref{prop:bettieq} implies that
  $2 \cdot g = \isv \Sigma = 4 \cdot g -2$, which is impossible for~$g \geq 2$.
  Hence, we obtain $2 \cdot g < \suv \Sigma \Z$. 
\end{exa}

\section{Comparison theorems}\label{sec:comparison}

We will now focus on the comparison theorem (Theorem~\ref{thm:fpq})
that relates weightless simplicial volumes over~$\F_p$ and~$\Q$.  In
order to promote fundamental cycles over~$\F_p$ to fundamental cycles
over~$\Q$ (while keeping control on the number of simplices), we will
consider the combinatorial types of simplices and encode the relevant
information in (finite) systems of linear equations.

\subsection{Combinatorics of singular cycles}

We record the combinatorial structure of singular chains in
certain generalised simplicial complexes. These complexes
are specified by a set of $n$-simplices and an adjacency relation
between the faces of these $n$-simplices. 

\begin{defi}[model complexes]
  Let $n \in \N$. An \emph{$n$-dimensional model complex} is a pair~$Z
  = (S,\sim)$ consisting of a set~$S$ (the set of \emph{simplices
    of~$Z$}) and an equivalence relation~$\sim$ on~$S \times \{0,\dots,n\}$.

  Two $n$-dimensional model complexes~$(S,\sim)$ and $(S', \sim')$ are
  \emph{isomorphic} if there exists a bijection~$f \colon S
  \longrightarrow S'$ between their sets of simplices that is
  compatible with the adjacency relations, i.e.,
  \[ \fa{(s,j),(t,k) \in S \times \{0,\dots,n\}} (s,j) \sim (t,k) \Longleftrightarrow \bigl(f(s),j\bigr) \sim' \bigl(f(t),k\bigr).
  \]
\end{defi}

\begin{defi}[model]
  Let $M$ be a topological space, let $n \in \N$, let $R$ be a
  commutative ring with unit, and let $c = \sum_{j=1}^m a_j \cdot
  \sigma_j \in C_n(M;R)$ be a singular chain in reduced form. The \emph{model
    of~$c$} is the $n$-dimensional model complex~$Z = (\{\sigma_1,
  \dots, \sigma_m\},\sim)$, where $\sim$ is given by
  \[ \fa{\sigma,\tau \in \{\sigma_1, \dots, \sigma_m\}}
  \fa{j,k \in \{0,\dots,n\}}
    (\sigma,j) \sim (\tau,k) \Longleftrightarrow
    \sigma \circ \partial_j = \tau \circ \partial_k.
  \]
  Here $\partial_j \colon \Delta^{n-1} \longrightarrow \Delta^n$ denotes
  the inclusion of the $j$-th face of~$\Delta^n$.
\end{defi}

\subsection{Translation to linear algebra}

Because the model of a singular chain stores which faces are equal,
we can encode the property of being a cycle into a linear equation.
(The linear equation is redundant, but it does have the advantage
that the description is simple.)

\begin{defi}[cycle matrix]
  Let $n \in \N$ and let $Z = (S,\sim)$ be an $n$-dimensional model
  complex. If $(s,i), (t,j) \in S$, then we write
  \[ r_{(s,i), (t,j)} := \begin{cases}
    0 & \text{if $(s,i) \not \sim (t,j)$} \\
    (-1)^{j} & \text{if $(s,i) \sim (t,j)$}.
    \end{cases}
  \]
  The \emph{cycle matrix of~$Z$} is the matrix~$A =
  (a_{(s,i),t})_{((s,i),t) \in (S \times \{0,\dots,n\})\times S}$ given by
  \[ a_{(s,i),t} :=
  \sum_{j=0}^n r_{(s,i), (t,j)}
  \]
  for all $(s,i) \in S \times \{0,\dots,n\}$, $t \in S$.
\end{defi}

\begin{lem}\label{lem:cycleencoding}
  Let $M$ be a topological space, let $n \in \N$, let $R$ be a
  commutative ring with unit. Let $c = \sum_{j=1}^m a_j \cdot
  \sigma_j \in C_n(M;R)$ be a singular chain in reduced form,
  and let $A$ be the cycle matrix of the model of~$c$. Then
  $c$ is a cycle if and only if
  \[ A \cdot
  \begin{pmatrix}
    a_1
    \\
    \vdots
    \\
    a_m
  \end{pmatrix}
  = 0.
  \]
  Here, we view $A$ as a matrix over~$R$ via the canonical unital ring
  homomorphism~$\Z \longrightarrow R$.
\end{lem}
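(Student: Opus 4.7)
The plan is to unravel both sides of the claimed equivalence into the same combinatorial condition: for each equivalence class of face-pairs, the sum of signed coefficients of the simplices whose faces lie in that class vanishes.

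First, I would expand the boundary directly:
\[ \partial c = \sum_{l=1}^m \sum_{i=0}^n (-1)^i \cdot a_l \cdot (\sigma_l \circ \partial_i) \in C_{n-1}(M;R). \]
By the definition of the equivalence relation~$\sim$, two pairs $(\sigma_l,i)$ and $(\sigma_k,j)$ yield the same singular $(n-1)$-simplex precisely when $(\sigma_l,i) \sim (\sigma_k,j)$, and different equivalence classes yield distinct $(n-1)$-simplices. Grouping the sum according to the equivalence classes, and using that the distinct singular simplices appearing are $R$-linearly independent in~$C_{n-1}(M;R)$, I obtain that $\partial c = 0$ if and only if, for every equivalence class~$E$ of $S \times \{0,\dots,n\}$,
\[ \sum_{(\sigma_k,j) \in E} (-1)^j \cdot a_k = 0. \]

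Next, I would unpack the matrix equation. From the definitions of~$A$ and of~$r_{(s,i),(t,j)}$, the $(\sigma_l,i)$-th entry of $A \cdot (a_1,\dots,a_m)^T$ is
\[ \sum_{k=1}^m a_{(\sigma_l,i),\sigma_k} \cdot a_k = \sum_{k=1}^m \sum_{j=0}^n r_{(\sigma_l,i),(\sigma_k,j)} \cdot a_k = \sum_{(\sigma_k,j) \sim (\sigma_l,i)} (-1)^j \cdot a_k. \]
Hence this entry equals the sum appearing in the cycle condition above for the equivalence class~$E = [(\sigma_l,i)]$. Since every equivalence class arises in this way (from any of its representatives), the vanishing of all entries of $A \cdot (a_1,\dots,a_m)^T$ is equivalent to the vanishing of the class sums, and therefore to $\partial c = 0$.

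There is no real obstacle here; the content is entirely bookkeeping. The only point requiring a little care is the observation that, after grouping, the face simplices indexing the distinct equivalence classes are pairwise distinct singular simplices and therefore linearly independent in the free $R$-module~$C_{n-1}(M;R)$, which is what allows the passage from ``$\partial c = 0$'' to the class-by-class vanishing. The redundancy of the system (each equivalence class contributes one equation per representative) is harmless: it produces repeated but consistent equations, and in particular the matrix equation is equivalent to, rather than stronger than, the cycle condition.
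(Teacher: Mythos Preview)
Your proof is correct and essentially identical to the paper's: both compute that the $(s,i)$-th entry of $A\cdot(a_1,\dots,a_m)^\top$ equals the coefficient of the face simplex $s\circ\partial_i$ in~$\partial c$, so the system vanishes iff $\partial c=0$. Your version is slightly more explicit about grouping by equivalence classes and about the harmless redundancy of the system, but the argument is the same.
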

\begin{proof}
  Let $(s,i) \in S \times \{0,\dots,n\}$ and let $A_{(s,i)}$ be the
  $(s,i)$-row of~$A$. By construction of the cycle matrix~$A$, the value
  \[ A_{(s,i)} \cdot 
  \begin{pmatrix}
    a_1
    \\
    \vdots
    \\
    a_m
  \end{pmatrix}
  \quad \text{in~$R$}
  \]
  is the total contribution of the singular simplex~$s
  \circ \partial_i$ in the chain~$\partial c = \sum_{j=1}^m \sum_{\ell =
    0}^n (-1)^\ell \cdot a_j \cdot \sigma_j \circ \partial_\ell \in
  C_{n-1}(M;R)$.
  
  Hence, $A \cdot (a_1,\dots, a_m)^\top = 0$ if and only if $\partial c = 0$.
\end{proof}

\subsection{Basics on solution spaces}

For the sake of completeness, we recall two facts on solutions of
linear equations:

\begin{defi}
  Let $k,m \in \N$, let $A \in M_{k \times m}(\Z)$ and $b \in
  \Z^k$. If $R$ is a commutative ring with unit, then we view~$A$ and
  $b$ as a matrix/vector over~$R$ and we write
  \begin{align*}
    L_A(R)
    & := \{ x \in R^m \mid A \cdot x = 0 \} \subset R^m
    \\
    L_{A,b}(R)
    & := \{ x \in R^m \mid A \cdot x = b \} \subset R^m
  \end{align*}
  for the corresponding sets of solutions over~$R$.
\end{defi}

\begin{lem}\label{lem:equalcharsolv}
  Let $F$ and $E$ be fields of the same characteristic.  
  Let $k, m \in \N$ and let $A \in M_{k \times m}(\Z)$, $b \in \Z^k$.
  Then
  \[ L_{A,b}(F) \neq \emptyset \Longleftrightarrow L_{A,b}(E) \neq \emptyset.
  \]
\end{lem}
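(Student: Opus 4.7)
The plan is to reduce the system to diagonal form over~$\Z$ via Smith normal form and then observe that the diagonal system's solvability depends only on the characteristic of the field. Specifically, there exist unimodular matrices $P \in \GL_k(\Z)$ and $Q \in \GL_m(\Z)$ such that $P A Q = D$ is a diagonal integer matrix with diagonal entries $d_1, \dots, d_{\min(k,m)}$ (zeros allowed). Since $P$ and $Q$ have integer entries and integer inverses, they are invertible over every field of every characteristic; hence the substitution $y := Q^{-1} x$ gives a bijection
\[ L_{A,b}(K) \;\longleftrightarrow\; L_{D,Pb}(K)
\]
for every field~$K$, where we view $D$ and $Pb$ as matrix/vector over~$K$ via the unital ring homomorphism~$\Z \longrightarrow K$.

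Second, solvability of the diagonal system $D y = P b$ over a field~$K$ can be read off row by row: writing $c := P b \in \Z^k$, the $i$-th equation $d_i \cdot y_i = c_i$ (or $0 = c_i$ if $i$ exceeds the diagonal length) is solvable in~$K$ if and only if either $d_i \neq 0$ in~$K$, or $d_i = 0$ and $c_i = 0$ in~$K$. Each of the conditions ``$d_i = 0$ in~$K$'' and ``$c_i = 0$ in~$K$'' for the fixed integers $d_i, c_i$ depends only on the characteristic of~$K$: in characteristic~$0$ it means the integer is zero, in characteristic~$p$ it means $p$ divides the integer. Consequently the conjunction of these conditions over all rows is the same predicate for $F$ and for~$E$, which gives the equivalence claimed in the lemma.

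There is no real obstacle here; the whole argument is bookkeeping around the Smith normal form. The only point that requires a small comment is that the transition matrices $P, Q$ must be \emph{integer} unimodular matrices so that conjugating the system by them is valid over any field — this is exactly what Smith normal form provides. One could equivalently invoke the Kronecker--Capelli criterion and note that for an integer matrix the rank over~$K$ is the largest size of a non-vanishing minor, and vanishing of an integer minor in~$K$ depends only on the characteristic of~$K$; but the Smith normal form route is more elementary and self-contained.
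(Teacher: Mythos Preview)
Your proof is correct. The paper takes a slightly different route: it first reduces by transitivity to the case that $F$ is the prime field of~$E$, then performs Gaussian elimination over~$F$ to put~$A$ into row echelon form (with transition matrices in~$\GL_k(F)$, $\GL_m(F)$), and finally observes that a row-echelon system over~$F$ is solvable over~$F$ if and only if it is solvable over the extension~$E$. Your approach instead diagonalises over~$\Z$ via Smith normal form and reads off directly that solvability of the diagonal system depends only on the characteristic. Your route avoids the transitivity/subfield reduction and produces transition matrices valid over \emph{every} ring simultaneously (indeed, Smith normal form is exactly the tool the paper uses for the next lemma, Lemma~\ref{lem:reduction}); the paper's route uses only the more elementary Gaussian elimination and the observation that solvability is inherited along field extensions. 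Both are perfectly fine; yours is arguably more uniform.
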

\begin{proof}
  By transitivity, it suffices to consider the case where $F$ is the
  prime field of~$E$.  In view of Gaussian elimination, there exist
  invertible matrices~$S \in \GL_k(F)$ and $T \in \GL_m(F)$ such that
  \[ \widetilde A := S \cdot A \cdot T \in M_{k \times m}(F)
  \]
  is in row echelon form (because the original equation is defined over~$\Z$,
  whence over~$F$, we only need matrices~$S$ and $T$ with entries in~$F$). 

  Let $\widetilde b := S \cdot b \cdot T \in F^k$. Then
  \[ L_{A,b}(F) \neq \emptyset
     \Longleftrightarrow 
     L_{\widetilde A, \widetilde b}(F) \neq \emptyset
  \]
  and similarly for~$E$. Moreover, because $\widetilde A$ is in row echelon
  form and $F$ is a subfield of~$E$, we have
  \[ L_{\widetilde A, \widetilde b}(F) \neq \emptyset \Longleftrightarrow
     L_{\widetilde A, \widetilde b}(E) \neq \emptyset.
  \]
  Combining these equivalences proves the claim.
\end{proof}

\begin{lem}\label{lem:reduction}
  Let $k,m \in \N$ and let $A \in M_{k \times m}(\Z)$.
  Suppose that $P \subset \N$ is an infinite set of primes with
  \[ \fa{p \in P} L_A(\F_p) \neq \{0\}.
  \]
  Then $L_A(\Z) \neq \{0\}$ and for all but finitely many~$p \in P$ we have
  (where $\pi_p \colon \Z^m \longrightarrow \F_p^m$ denotes the reduction modulo~$p$) 
  \[ \pi_p\bigl(L_A(\Z)\bigr) = L_A(\F_p).
  \]
\end{lem}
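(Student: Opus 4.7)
The plan is to reduce everything to a diagonal matrix via the Smith normal form. Choose $S \in \GL_k(\Z)$ and $T \in \GL_m(\Z)$ with $D := S \cdot A \cdot T$ in Smith normal form: the only non-zero entries of $D$ are diagonal values $d_1, \ldots, d_r \in \Z \setminus \{0\}$, where $r := \rk_\Z A$. Set $d := d_1 \cdots d_r$, with the convention $d := 1$ if $r = 0$. This integer is the gatekeeper that will determine the finite exceptional set of primes. Since $S$ and $T$ have inverses with integer entries, they remain invertible over every commutative unital ring~$R$, and left multiplication by $T$ restricts to a bijection $L_D(R) \to L_A(R)$ for every such~$R$.

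First, I would compute both kernels explicitly. Directly from the shape of~$D$,
\[ L_D(R) = \bigl\{ x \in R^m \bigm| d_i \cdot x_i = 0 \text{ for all } i \leq r \bigr\}. \]
For $R = \Z$ this gives $L_D(\Z) = \{0\}^r \times \Z^{m-r}$, while for $R = \F_p$ with $p \nmid d$ it gives $L_D(\F_p) = \{0\}^r \times \F_p^{m-r}$.

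Next, I would derive non-triviality of $L_A(\Z)$. If we had $r = m$, then $L_D(\F_p) = \{0\}$ for every prime $p$ not dividing~$d$, and hence $L_A(\F_p) = \{0\}$ for all but finitely many primes; this would contradict the hypothesis that $L_A(\F_p)$ is non-zero for the infinitely many primes in~$P$. Consequently $r < m$, so $L_D(\Z)$, and therefore also $L_A(\Z)$, is non-trivial.

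Finally, to identify the reductions for primes~$p$ with $p \nmid d$, I would use that reduction modulo $p$ commutes with left multiplication by an integer matrix; combined with the obvious fact that $\pi_p$ sends $\{0\}^r \times \Z^{m-r}$ onto $\{0\}^r \times \F_p^{m-r}$, this yields
\[ \pi_p\bigl(L_A(\Z)\bigr) = \pi_p\bigl(T \cdot L_D(\Z)\bigr) = T \cdot L_D(\F_p) = L_A(\F_p). \]
Only the finitely many primes dividing~$d$ are excluded, which matches the conclusion of the lemma. The only non-routine aspect is identifying the correct arithmetic invariant controlling the exceptional primes --- namely the product of the elementary divisors of~$A$; once the Smith normal form is invoked, everything reduces to an explicit diagonal calculation, and I do not expect a serious obstacle.
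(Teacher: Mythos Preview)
Your proof is correct and follows essentially the same approach as the paper: both arguments reduce to the Smith normal form, identify the finite exceptional set as the primes dividing the non-zero elementary divisors, and then compare kernels over~$\Z$ and~$\F_p$ via the invertible change-of-basis matrices. The only cosmetic difference is that you first deduce $r<m$ and then the equality of kernels, whereas the paper establishes the kernel equality directly and reads off~$L_A(\Z)\neq\{0\}$ at the end.
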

\begin{proof}
  The structure theory of matrices over~$\Z$ shows that there exist
  invertible integral matrices~$S \in \GL_k(\Z)$ and $T \in \GL_m(\Z)$
  such that
  \[ \widetilde A := S \cdot A \cdot T
  \]
  is in Smith normal form; i.e., $\widetilde A$ is a ``diagonal'' matrix of the form
  \[ \widetilde A
  = \begin{pmatrix}
    a_1 
    \\
    & \ddots 
    \\
    & & a_r 
    \\
    & & & 0
    \\
    & & & & \ddots
    \\
    & & & & & 0
  \end{pmatrix}
  \in M_{k \times m}(\Z)
  \]
  with $r \in \{0, \dots, \min(m,k)\}$ and $a_1,\dots, a_r \in \Z \setminus \{0\}$
  satisfying $a_1 \mid a_2$, $a_2 \mid a_3$, \dots, $a_{r-1} \mid
  a_r$.

  If $R$ is a domain, then (we interpret integral matrices
  in the canonical way as matrices over~$R$ and use the fact
  that invertible integral matrices stay invertible over~$R$)
  \[ L_A(R) = L_{S^{-1} \cdot \widetilde A \cdot T^{-1}}(R)
            = T \cdot L_{\widetilde A}(R).
  \]
  In particular, we have~$L_A(R) = \{0\}$ if and only if
  \[ r = m \quad\text{and}\quad \fa{j \in \{1,\dots,r\}} a_j \neq 0 \text{ in~$R$}.
  \]

  We now consider the set~$Q \subset \N$ of prime divisors
  of~$a_1,\dots, a_r$.  Clearly, this set~$Q$ is finite. Then $P' := P
  \setminus Q$ is cofinite in~$P$ and for all~$p \in P'$ we
  have~$L_{\widetilde A}(\F_p) = \pi_p(L_{\widetilde A}(\Z))$. Hence,
  we obtain
  \begin{align*}
    L_A(\F_p)
    & = T \cdot L_{\widetilde A} (\F_p)
      = T \cdot \pi_p \bigl(L_{\widetilde A}(\Z)\bigr)
    = \pi_p \bigl( T \cdot L_{\widetilde A}(\Z)\bigr)
    \\
    & = \pi_p \bigl( L_A(\Z) \bigr)
  \end{align*}
  for all~$p \in P'$. 
  In particular, $L_A(\Z) \neq \{0\}$. 
\end{proof}

\subsection{Equal characteristic}

As a warm-up for the proof of Theorem~\ref{thm:fpq}, we prove the
following comparison result in equal characteristic:

\begin{thm}[equal characteristic]\label{thm:equalchar}
  Let $M$ be an oriented closed connected manifold and 
  let $F$ and $E$ be fields of the same characteristic. Then
   \[ \suv M F = \suv M E.
  \]
\end{thm}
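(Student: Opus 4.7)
The plan is to exploit the linear-algebraic machinery of Lemmas~\ref{lem:cycleencoding} and~\ref{lem:equalcharsolv}. By symmetry (or transitivity through the common prime field) it suffices to show $\suv M F \leq \suv M E$. I would start with an $E$-fundamental cycle $c = \sum_{j=1}^m a_j \cdot \sigma_j$ in reduced form with $m = \suv M E$, fix its model, and try to construct an $F$-fundamental cycle supported on the same simplices $\sigma_1, \dots, \sigma_m$ by solving a suitable system of equations.

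The key idea is to encode the property ``is an $R$-fundamental cycle'' as an integer linear equation, so that the transfer between $E$ and $F$ reduces to Lemma~\ref{lem:equalcharsolv}. Because $H^n(M;\Z) \cong \Z$ pairs perfectly with $H_n(M;\Z) \cong \Z$, I would fix a cocycle $\omega \in C^n(M;\Z)$ representing a generator with $\langle [\omega], [M]_\Z \rangle = 1$. Base-changed to any commutative ring $R$ with unit via the canonical map $\Z \to R$, the cocycle $\omega$ detects fundamental classes: an $R$-cycle $c' = \sum x_j \cdot \sigma_j$ represents $[M]_R$ in $H_n(M;R) \cong R$ if and only if $\sum_{j=1}^m \omega(\sigma_j) \cdot x_j = 1_R$, because $\omega$ vanishes on boundaries and $[M]_R$ generates $H_n(M;R)$.

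Combining the cycle matrix $A$ from Lemma~\ref{lem:cycleencoding} with this additional linear constraint and setting $w_j := \omega(\sigma_j) \in \Z$, I would consider the augmented integer system
\[
  \begin{pmatrix} A \\ w \end{pmatrix} \cdot x = \begin{pmatrix} 0 \\ 1 \end{pmatrix}.
\]
The vector $(a_1, \dots, a_m)^\top \in E^m$ is a solution: the $A$-part by Lemma~\ref{lem:cycleencoding}, the $w$-part by the fundamental cycle property combined with the choice of $\omega$. Lemma~\ref{lem:equalcharsolv} then delivers a solution $(b_1, \dots, b_m)^\top \in F^m$. The chain $\sum_{j=1}^m b_j \cdot \sigma_j \in C_n(M;F)$ is an $F$-cycle (again by Lemma~\ref{lem:cycleencoding}) that pairs to $1_F$ against $\omega$, hence is an $F$-fundamental cycle, and it uses at most $m$ simplices. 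Thus $\suv M F \leq m = \suv M E$, and the reverse inequality follows by symmetry.

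The main obstacle is conceptual rather than technical: identifying a $\Z$-linear encoding of the fundamental cycle property that survives change of coefficients. Once the integer cocycle $\omega$ dual to $[M]_\Z$ is picked as the right testing functional, everything else is a direct application of the lemmas already in place. Note that not all $b_j$ can vanish, since the $w$-row of the augmented system forces the coefficient vector to be non-zero, so the simplex count is genuinely bounded by $m$.
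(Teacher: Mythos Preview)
Your proposal is correct and follows essentially the same route as the paper: encode ``is a fundamental cycle'' as an integer linear system (the cycle matrix~$A$ augmented by one extra row detecting the class), then invoke Lemma~\ref{lem:equalcharsolv} to transfer a solution from~$E$ to~$F$. The only difference is cosmetic: you detect the fundamental class via an integer cocycle~$\omega$ dual to~$[M]_\Z$, whereas the paper uses local degrees~$d_j$ at a point~$x\in M$; both yield an integer row vector with the same property and the remainder of the argument is identical.
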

\begin{proof}
  Let $n := \dim M$ and let $c = \sum_{j=1}^m a_j \cdot \sigma_j \in
  C_n(M;E)$ be an $E$-fundamental cycle of~$M$ in reduced form
  with~$|c| = m = \suv M E$. We consider the model~$Z$ of~$c$
  and the associated cycle matrix~$A \in M_{k \times m}(\Z)$ (for simplicity,
  we index the rows of~$A$ by natural numbers instead of by pairs). So far,
  $A$ only encodes the fact that $c$ is a cycle. In order to also
  encode the corresponding homology class, we proceed as follows:

  We pick a point~$x \in M$ and consider the local degrees~$d_j \in \Z$
  determined uniquely by the relation
  \[ H_n(\sigma_j;\Z)[\Delta^n,\partial \Delta^n] = d_j \cdot [M; x]_\Z 
     \in H_n\bigl(M,M\setminus \{x\};\Z\bigr);
  \]
  here, $[M;x]_\Z = H_n(i;\Z)[M]_\Z$ denotes the generator
  in~$H_n(M,M\setminus\{x\};\Z) \cong H_n(M;\Z) \cong \Z$
  corresponding to the fundamental class~$[M]_\Z$ under the
  inclusion~$i \colon (M,\emptyset) \longrightarrow (M,M\setminus
  \{x\})$.  Because $c$ is a fundamental cycle, we obtain
  \[ \sum_{j=1}^m d_j \cdot a_j = 1 \quad \text{in $E$}.
  \]
  Hence, the coefficients~$(a_1,\dots,a_m) \in E^m$ of~$c$ lie
  in the solution set~$L_{\overline A, \overline b}(E)$,
  where $\overline A \in M_{(k+1)\times m}(\Z)$ is the matrix obtained
  from~$A$ by adding the last row~$(d_1,\dots, d_m)$ and where $\overline b
  := (0,\dots,0,1) \in \Z^{k+1}$.
  
  In particular, $L_{\overline A, \overline b}(E) \neq
  \emptyset$. Because $F$ and $E$ have the same characteristic, we
  also have~$L_{\overline A, \overline b}(F) \neq \emptyset$
  (Lemma~\ref{lem:equalcharsolv}). Let $(a'_1,\dots, a'_m) \in L_{\overline A, \overline b}(F)$.
  We now consider the chain
  \[ c' := \sum_{j=1}^m a'_j \cdot \sigma_j \in C_n(M;F).
  \]
  By construction, the model of~$c'$ is contained in the model~$Z$ of~$c$. 
  The chain~$c'$ is a cycle because $(a'_1,\dots,a'_m) \in L_{\overline A,\overline
    b}(F) \subset L_A(F)$ and $A$ is the cycle matrix of the model~$Z$
  (Lemma~\ref{lem:cycleencoding}). Moreover, $c'$
  is an $F$-fundamental cycle of~$M$: The additional equation 
  \[ \sum_{j=1}^m d_j \cdot a'_j = 1 \quad \text{in $F$}
  \]
  ensures that 
  \begin{align*}
    H_n(i; F) [c']
  & = \sum_{j=1}^m a'_j \cdot H_n(\sigma_j;F)[\Delta^n,\partial \Delta^n]_F
  = \sum_{j=1}^m a'_j \cdot d_j \cdot [M;x]_F
  \\ & = [M;x]_F
  \quad \text{in~$H_n(M, M \setminus \{x\};F)$}, 
  \end{align*}
  and hence~$[c'] = [M]_F$.

  In particular,
  $\suv M F \leq |c'| \leq m = \suv M E.
  $ 
  By symmetry, we also obtain the reverse inequality~$\suv M E \leq \suv M F$.
\end{proof}

\subsection{Comparison with characteristic~$0$}

Using the tools and techniques from the previous sections, we will now
give a proof of Theorem~\ref{thm:fpq}. The situation in this proof is
slightly different from the equal characteristic case because we will
need to deal with cycles that have isomorphic models but consist of
different singular simplices. 

\begin{proof}[Proof of Theorem~\ref{thm:fpq}]
  We begin with the proof that $\svp M p \leq \suv M \Q$ holds for all
  but finitely many primes~$p$: Let $c \in C_n(M;\Q)$ be a
  $\Q$-fun\-da\-men\-tal cycle of~$M$ with~$|c| = \suv M \Q$.  Then there
  exists an~$m \in \N \setminus \{0\}$ such that $m \cdot c$ is an
  integral cycle, representing~$m \cdot [M]_\Z $ in~$H_n(M;\Z)$.  Let
  $P \subset \N$ be the set of primes that do \emph{not}
  divide~$m$. Then $P$ contains all but finitely many primes
  in~$\N$. Let $p \in P$ and let $c_p \in C_n(M;\F_p)$ be the mod~$p$
  reduction of~$c$. Then $c_p$ is a cycle, $m$ is a unit modulo~$p$, and
  \[ \frac 1m \cdot c_p \in C_n(M;\F_p)
  \]
  is an $\F_p$-fundamental cycle of~$M$. In particular,
  \[ \svp M p \leq \Bigl| \frac 1m \cdot c_p \Bigr| \leq |c| = \suv M \Q.
  \]

  We will now prove the converse estimate: 
  By Remark~\ref{rem:trivest}, we have
  \[ \bigl\{ \svp M p \bigm| p \in \N \text{ prime} \bigr\}
     \subset \bigl\{ 0,\dots, \isv M \bigr\};
  \]
  in particular, the set on the left hand side is finite. Let $V \in \{0, \dots, \isv M \}$
  be the smallest accumulation point of~$(\svp M p)_{p \in \N \text{ prime}}$ and let $P \subset \N$
  be the set of primes~$p \in \N$ with
  $V \leq \svp M p$. 
  Because $\{0,\dots,\isv M \}$ is finite, the set $P$ is
  cofinite in the set of primes. Therefore, it suffices to prove that
  for all but finitely many~$p \in P$ we have
  \[ \suv M \Q \leq V \leq \svp M p.
  \]
  Because there exist only finitely many isomorphism classes of
  $n$-dimensional model complexes with at most~$V$ simplices, there exists such
  an $n$-dimension\-al model complex~$Z$ and an infinite subset~$P'
  \subset P$ such that for every~$p \in P'$ there exists an
  $\F_p$-fundamental cycle of~$M$ whose model is isomorphic to~$Z$. We
  will now show that there also exists a $\Q$-fundamental cycle of~$M$
  whose model is isomorphic to~$Z$:

  Let $A \in M_{k \times m}(\Z)$ be the integral cycle matrix
  associated with~$Z$ (again, we simplify the index set).
  By Lemma~\ref{lem:cycleencoding}, we hence know
  that
  \[ \fa{p \in P'} L_A(\F_p) \neq \{0\}.
  \]
  Applying Lemma~\ref{lem:reduction} therefore shows that there exists
  a cofinite set~$P'' \subset P'$ with
  \[ \fa{p \in P''} \pi_p \bigl( L_A(\Z)\bigr) = L_A(\F_p).
  \]
  Let $p \in P''$ and let $c_p = \sum_{j=1}^V a_j \cdot \sigma_j\in C_n(M;\F_p)$ be an $\F_p$-fundamental
  cycle of~$M$ with model isomorphic to~$Z$.
  Because of~$\pi_p(L_A(\Z)) = L_A(\F_p)$ 
  and Lemma~\ref{lem:cycleencoding} there exists a \emph{cycle}~$c = \sum_{j=1}^V\widetilde a_j
  \cdot \sigma_j \in C_n(M;\Z)$
  whose reduction modulo~$p$ equals~$c_p$ and whose model is isomorphic to~$Z$. In particular,
  \[ [c] \neq 0 \in H_n(M;\Z) \subset H_n(M;\Q)
  \]
  (because its reduction~$[c_p]$ is non-zero in~$H_n(M;\F_p)$). Then a rational
  multiple of~$c$ is a $\Q$-fundamental cycle of~$M$ and we obtain
  $\suv M  \Q \leq |c| \leq V.$
\end{proof}

\begin{cor}
  Let $M$ be an oriented closed connected manifold and let $F$ be a field
  of characteristic~$0$. Then for all but finitely many primes~$p \in \N$
  we have
  \[ \svp M p = \suv M F.
  \]
\end{cor}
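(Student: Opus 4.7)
The plan is to deduce this corollary directly by chaining the two main comparison results already established in this section. Specifically, Theorem~\ref{thm:fpq} gives $\svp M p = \suv M \Q$ for all but finitely many primes $p$, while Theorem~\ref{thm:equalchar} asserts that weightless simplicial volumes only depend on the characteristic of the coefficient field.

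Concretely, I would argue as follows. Since $F$ has characteristic~$0$, its prime field is~$\Q$, so $F$ and $\Q$ are fields of the same characteristic. Applying Theorem~\ref{thm:equalchar} therefore yields
\[ \suv M F = \suv M \Q. \]
On the other hand, by Theorem~\ref{thm:fpq} there is a finite exceptional set $E \subset \N$ of primes outside of which $\svp M p = \suv M \Q$. For every prime $p \in \N \setminus E$ we then combine the two identities to obtain
\[ \svp M p = \suv M \Q = \suv M F, \]
which is exactly the claim.

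There is no real obstacle here: the corollary is just a formal consequence of Theorem~\ref{thm:fpq} and Theorem~\ref{thm:equalchar}, and no additional work is needed beyond noting that $\Q$ is the prime field of every characteristic~$0$ field so that the equal characteristic hypothesis of Theorem~\ref{thm:equalchar} applies.
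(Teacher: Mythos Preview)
Your argument is correct and matches the paper's proof, which simply states that one only needs to combine Theorem~\ref{thm:fpq} with Theorem~\ref{thm:equalchar}. You have made the combination explicit in exactly the intended way.
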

\begin{proof}
  We only need to combine Theorem~\ref{thm:fpq} with Theorem~\ref{thm:equalchar}.
\end{proof}

\section{Stabilisation along finite coverings}\label{sec:stable}

In this section, we discuss the stabilisation of weightless
simplicial volumes along towers of finite coverings. In particular,
we will prove Theorem~\ref{thm:selfmap} and Theorem~\ref{thm:dynview}.

\subsection{Stable weightless simplicial volumes}

The classical simplicial volume~$\sv \args$ is multiplicative under finite coverings;
however, integral simplicial volume~$\isv \args$, in general, is \emph{not}
multiplicative under finite coverings. Hence, it makes sense to study the
corresponding gradient invariant, the \emph{stable integral simplicial volume},
defined by
\[ \stisv M := \inf \Bigl\{ \frac{\isv N}d \Bigm|
\text{$d \in \N$, $N \rightarrow M$ is a $d$-sheeted covering}
\Bigr\}
\]
for oriented closed connected $n$-manifolds~$M$.

Analogously, we can also introduce the gradient invariants associated
with weigthless simplicial volumes:

\begin{defi}[stable weightless simplicial volume]
  Let $M$ be an oriented closed connected $n$-manifold and let $R$
  be a commutative ring with unit. Then the \emph{stable weightless $R$-simplicial volume}
  of~$M$ is defined by
  \[ \stsuv M R := \inf \Bigl\{ \frac{\suv N R}d \Bigm|
  \text{$d \in \N$, $N \rightarrow M$ is a $d$-sheeted covering}
  \Bigr\}.
  \]
  This generalises Definition~\ref{def:stsvp}.
\end{defi}

\begin{rem}[basic estimates]
  Let $M$ be an oriented closed connected $n$-manifold and let $R$ be a commutative ring
  with unit.
  \begin{enumerate}
  \item Then~$\sv M \leq \stisv M$ and~$\stsuv M R \leq \stisv M$.
  \item If $M$ admits a self-covering of non-trivial degree, then
    $\stsuv M R =0$. In particular, the stable weightless simplicial
    volumes of tori are zero.
  \item If $(\Gamma_j)_{j \in \N}$ is a descending chain of finite index subgroups of~$\pi_1(M)$
    and $(M_j)_{j \in \N}$ is the corresponding tower of finite covering manifolds of~$M$,
    then the sequence
    \[ \Bigl( \frac1{[\pi_1(M) : \Gamma_j ]} \cdot \suv {M_j} R\Bigr)_{j \in \N}
    \]
    is monotonically decreasing (full lifts of fundamental cycles give
    fundamental cycles of finite coverings), whence convergent. Therefore,
    \[ \lim_{j \rightarrow \infty} \frac{\suv {M_j} R}{[\pi_1(M) :\Gamma_j]}
     = \inf_{j \rightarrow \infty} \frac{\suv {M_j} R}{[\pi_1(M) :\Gamma_j]}.
    \]
  \item Every (generalised) triangulation of~$M$ gives rise to an $\F_2$-fundamen\-tal cycle.
    Hence, we obtain
    \[ \stsvp M 2 \leq \sigma_\infty (M),
    \]
    where $\sigma_\infty(M)$ denotes the stable $\Delta$-complexity of~$M$~\cite{milnorthurston}.
  \end{enumerate}
\end{rem}

\subsection{The Euler characteristic estimate}\label{subsec:chi}

We start with a few simple example estimates for stable weightless
simplicial volumes of even-dimensional hyperbolic manifolds.

\begin{exa}[surfaces]
  Let $\Sigma$ be an oriented closed connected (non-empty) surface
  of genus~$g \in \N_{\geq 1}$ and let $p \in \N$ be a prime. Then
  we know $2 \cdot g \leq \svp \Sigma p \leq 4 \cdot g - 2$ (Example~\ref{exa:surface}). 
  Taking the infimum over all finite coverings of~$\Sigma$ produces the estimates
  (and similarly for~$\stsuv M \Z$)
  \[ \bigl| \chi(\Sigma) \bigr|
  = 2 \cdot g - 2 \leq \stsvp \Sigma p \leq 4 \cdot g - 4
  = \sv \Sigma = 2 \cdot \bigl| \chi(\Sigma)\bigr|.
  \]
  However, the exact values of~$\stsvp \Sigma p$ or $\stsuv \Sigma \Z$ are
  not known if~$g \geq 2$.
\end{exa}

\begin{prop}[Euler characteristic estimate]\label{prop:chi}
  Let $M$ be an oriented closed connected $n$-manifold and let $R$
  be a principal ideal domain. Then
  \[ \bigl| \chi(M) \bigr| \leq (n+1) \cdot \suv M R. 
  \]
  In particular: For all prime numbers~$p \in \N$ we have
  $ \bigl| \chi(M) \bigr| \leq (n+1) \cdot \svp M p.
  $
\end{prop}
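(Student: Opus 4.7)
The plan is to reduce the statement to the rank bound from Proposition~\ref{prop:PD}(2) together with the standard expression of $\chi(M)$ as an alternating sum of homology ranks. Concretely, I will show
\[ \bigl|\chi(M)\bigr| = \biggl| \sum_{k=0}^n (-1)^k \rk_R H_k(M;R) \biggr|
     \leq \sum_{k=0}^n \rk_R H_k(M;R)
     \leq (n+1) \cdot \suv M R, \]
where the last step uses Proposition~\ref{prop:PD}(2) term by term.

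The only non-cosmetic step is the first equality, i.e., that the Euler characteristic coincides with the alternating sum of $R$-ranks for an arbitrary PID~$R$. The way I would handle this is to pass to the fraction field~$F$ of~$R$: since $R \to F$ is a flat ring extension and $H_k(M;R)$ is finitely generated over the PID~$R$ (because $M$ is compact), we have~$H_k(M;F) \cong H_k(M;R) \otimes_R F$, which gives $\dim_F H_k(M;F) = \rk_R H_k(M;R)$. On the other hand, over any field the alternating sum of homology dimensions equals the Euler characteristic (via the rank-nullity theorem applied to a finite CW structure on~$M$), so
\[ \chi(M) = \sum_{k=0}^n (-1)^k \dim_F H_k(M;F) = \sum_{k=0}^n (-1)^k \rk_R H_k(M;R). \]

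The final inequality then follows by the triangle inequality and $(n+1)$-fold application of Proposition~\ref{prop:PD}(2), noting that $H_k(M;R) = 0$ for $k > n$. For the ``in particular'' clause, $\F_p$ is a field, hence a PID, and $\svp M p = \suv M {\F_p}$ by definition, so the specialisation is immediate. There is no real obstacle: the argument is entirely formal once the rank bound of Proposition~\ref{prop:PD} is in hand, with the only mild subtlety being the flatness/fraction-field remark needed to handle PIDs beyond the field case.
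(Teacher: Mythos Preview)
Your argument is correct and matches the paper's core reasoning: bound $|\chi(M)|$ by the sum of $R$-ranks of homology and apply Proposition~\ref{prop:PD}(2) termwise. The only difference is that the paper's proof additionally runs the same estimate on an arbitrary finite covering~$N \to M$ and uses multiplicativity of~$\chi$ under coverings to deduce the stronger stable bound $|\chi(M)| \leq (n+1) \cdot \stsuv M R$, which is invoked elsewhere in the paper but is not part of the proposition as stated.
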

\begin{proof}
  Let $N \longrightarrow M$ be a finite covering of~$M$ with $d \in \N$ sheets.
  Then the Betti number estimate of Proposition~\ref{prop:PD} shows that
  \begin{align*}
    \bigl| \chi(M) \bigr|
    & = \frac 1d \cdot \bigl| \chi(N) \bigr|
    \leq \frac 1d \cdot \sum_{j=0}^n \rk_R H_j(N;R)
    \\
    & \leq (n+1) \cdot \frac 1d \cdot \suv N R.
  \end{align*}
  Taking the infimum over all finite coverings of~$M$ finishes
  the proof.
\end{proof}

\begin{cor}
  Let $n \in \N$ be even. Then there exists a constant~$C_n \in \N_{>0}$
  such that: For every oriented closed connected hyperbolic $n$-manifold
  and every principal ideal domain~$R$ we have
  \[ \suv M R \geq C_n \cdot \vol(M).
  \]
  In particular: For all prime numbers~$p \in \N$ we have~$\svp M p \geq C_n \cdot \vol(M)$.
\end{cor}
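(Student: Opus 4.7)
The plan is to combine the Euler characteristic estimate from Proposition~\ref{prop:chi} with the Gauss--Bonnet theorem, which in even dimensions turns the Euler characteristic of a hyperbolic manifold into a quantity proportional to its hyperbolic volume.

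First, I would recall the Gauss--Bonnet theorem in the form it takes for hyperbolic manifolds: because hyperbolic $n$-space has constant sectional curvature~$-1$, its Pfaffian of the curvature form is a constant multiple of the volume form. Hence, for every even~$n \in \N$, there exists a constant~$\gamma_n \in \R_{>0}$ (depending only on~$n$) such that every oriented closed connected hyperbolic $n$-manifold~$M$ satisfies
\[ \bigl|\chi(M)\bigr| = \gamma_n \cdot \vol(M).
\]
This is purely a Riemannian geometry input, and I would simply cite it.

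Second, I would invoke Proposition~\ref{prop:chi}: for every principal ideal domain~$R$ and every oriented closed connected $n$-manifold~$M$ we have
\[ \bigl|\chi(M)\bigr| \leq (n+1) \cdot \suv M R.
\]
Combining the two displays yields
\[ \suv M R \geq \frac{\gamma_n}{n+1} \cdot \vol(M),
\]
so setting $C_n := \gamma_n / (n+1) \in \R_{>0}$ proves the main claim. The statement for $\svp M p$ is the special case $R = \F_p$, which is a PID.

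There is no serious obstacle: the only non-trivial ingredient is Gauss--Bonnet for hyperbolic manifolds, and the rest is a direct application of Proposition~\ref{prop:chi}. Strictly speaking, the statement asks for $C_n \in \N_{>0}$; to get this one may replace $\gamma_n/(n+1)$ by any positive integer that is at most this ratio (for instance, $C_n := \lfloor \gamma_n/(n+1)\rfloor$ if this floor is positive; otherwise one would instead formulate the constant as a positive real, which is the natural reading of the corollary).
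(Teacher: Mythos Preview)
Your proposal is correct and follows exactly the paper's approach: combine the Gauss--Bonnet formula for hyperbolic manifolds with Proposition~\ref{prop:chi} to obtain the constant~$C_n = \frac{2}{(n+1)\,\Omega_n}$ (the paper in fact phrases the final inequality for the stable version~$\stsuv M R$, which is slightly stronger). Your remark about~$C_n \in \N_{>0}$ is apt; the paper's own constant is not an integer either, so ``$\R_{>0}$'' is indeed the natural reading.
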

\begin{proof}
  Applying the generalised Gau\ss-Bonnet
  formula~\cite[Theorem~11.3.2]{ratcliffe} to the hyperbolic manifold~$M$
  results in 
  \[ \chi (M) = (-1)^{n/2} \cdot \frac 2 {\Omega_n} \cdot \vol(M),
  \]
  where $\Omega_n$ denotes the volume of the standard unit $n$-sphere. 

  In combination with Proposition~\ref{prop:chi}, we therefore obtain
  \[ \stsuv M R \geq \frac 1{n+1} \cdot \bigl|\chi(M)\bigr|
  = \frac 2 {(n+1) \cdot \Omega_n} \cdot \vol(M).
  \qedhere
  \]
\end{proof}

\subsection{Self-maps of non-trivial degree}

We will now prove Theorem~\ref{thm:selfmap} in the following, slightly
more general, form:

\begin{thm}\label{thm:selfmapgen}
  Let $M$ be an oriented closed connected aspherical manifold with residually
  finite fundamental group that admits a continuous self-map~$f \colon M \longrightarrow M$
  with~$\deg f \not\in \{-1,0,1\}$. If $R$ is a commutative ring with unit
  and $\deg f$ is a unit in~$R$, then
  \[ \stsuv M R = 0.
  \]
\end{thm}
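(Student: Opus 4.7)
The plan is to produce a family of finite coverings $p_k \colon M_k \to M$, $k \in \N$, whose degrees $d_k$ tend to infinity and through which the iterated self-map $f^k$ lifts to a map of $R$-unit degree; the theorem will then follow from the degree monotonicity of weightless simplicial volume (Proposition~\ref{prop:degreemon}).

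Setting $\Gamma := \pi_1(M)$ and $\varphi := f_* \colon \Gamma \to \Gamma$, I would let $p_k \colon M_k \to M$ denote the connected covering corresponding to the subgroup $\varphi^k(\Gamma) \leq \Gamma$, of finite index $d_k := [\Gamma : \varphi^k(\Gamma)]$ (finite because $\deg f^k = (\deg f)^k \neq 0$). By asphericity of $M$, the map $f^k$ admits a lift $\widetilde{f^k} \colon M \to M_k$ with $p_k \circ \widetilde{f^k} \simeq f^k$, and multiplicativity of degree gives $d_k \cdot \deg \widetilde{f^k} = (\deg f)^k$. Since $(\deg f)^k$ is a unit in $R$, so is $\deg \widetilde{f^k}$, and Proposition~\ref{prop:degreemon} yields $\suv{M_k}{R} \leq \suv{M}{R}$. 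Thus
\[
\stsuv{M}{R} \leq \frac{\suv{M_k}{R}}{d_k} \leq \frac{\suv{M}{R}}{d_k},
\]
and everything reduces to showing $d_k \to \infty$.

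For this, consider the descending chain $\Gamma \supseteq \varphi(\Gamma) \supseteq \varphi^2(\Gamma) \supseteq \cdots$. If $(d_k)_{k \in \N}$ were bounded, this chain would stabilise at a finite-index subgroup $H := \varphi^{k_0}(\Gamma)$ with $\varphi(H) = H$. As a finite-index subgroup of the finitely generated residually finite group $\Gamma$, $H$ is itself finitely generated and residually finite, hence Hopfian by Mal'cev's theorem, so the surjection $\varphi|_H \colon H \to H$ is in fact an automorphism. Consequently $\ker \varphi \cap H = \{e\}$, which together with $[\Gamma : H] < \infty$ forces $\ker \varphi$ to be finite. Since $M$ is a closed aspherical manifold, $\Gamma$ is torsion-free (by Smith theory applied to the contractible universal cover), so $\ker \varphi = \{e\}$ and $\varphi$ is injective. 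Injectivity then yields $[\varphi^{k-1}(\Gamma) : \varphi^k(\Gamma)] = [\Gamma : \varphi(\Gamma)]$ for all $k \geq 1$, so $d_k = [\Gamma : \varphi(\Gamma)]^k$; if $[\Gamma : \varphi(\Gamma)] \geq 2$ this contradicts boundedness of $(d_k)$, while if $[\Gamma : \varphi(\Gamma)] = 1$ then $\varphi$ is an automorphism of $\Gamma$, $f$ a homotopy equivalence, and $|\deg f| = 1$, contradicting the hypothesis. Hence $d_k \to \infty$, which finishes the proof. The main obstacle is precisely this step: one has to combine the Hopfian property of $H$ (via Mal'cev, needing residual finiteness \emph{and} finite generation) with the torsion-freeness of $\Gamma$ (via Smith theory on the universal cover) to bootstrap the bare injectivity of $\varphi|_H$ into injectivity of $\varphi$ on all of $\Gamma$.
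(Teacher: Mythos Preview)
Your proof is correct and follows the paper's overall strategy: lift $f^k$ through the covering $M_k \to M$ associated with $\varphi^k(\Gamma)$, apply degree monotonicity (Proposition~\ref{prop:degreemon}), and reduce to showing that $d_k = [\Gamma : \varphi^k(\Gamma)]$ is unbounded. The only substantive difference is in this last step. The paper isolates the key fact as Lemma~\ref{lem:degindex}: for any self-map $g$ of a closed aspherical manifold with residually finite fundamental group and $\deg g \notin \{-1,0,1\}$ one has $[\pi_1 : \im \pi_1(g)] > 1$, directly from Hopfianness of $\pi_1$; applying this to $g_k := \overline{f}_k \circ p_k \colon M_k \to M_k$ (which has degree $(\deg f)^k$) gives $[\Gamma_k : \Gamma_{2k}] > 1$ for every $k$, so $[\Gamma : \Gamma_{2^k}]$ is strictly increasing, and torsion-freeness of $\Gamma$ is never needed. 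Your route instead argues by stabilisation, uses Hopfianness of the stable subgroup $H$, and then invokes torsion-freeness of $\Gamma$ (via the finite-dimensional $K(\pi,1)$ argument) to bootstrap injectivity of $\varphi$ from $H$ to all of $\Gamma$, yielding the exact formula $d_k = d_1^{\,k}$. Both arguments are valid; the paper's is a bit more economical, while yours extracts the sharper growth rate. One small correction: the lift $\widetilde{f^k}$ exists by the covering lifting criterion (since $(f^k)_*(\Gamma) = \varphi^k(\Gamma)$), not by asphericity; asphericity is only used later, to deduce that $f$ is a homotopy equivalence and that $\Gamma$ is torsion-free.
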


We proceed in the same way as in the proof of the corresponding
statement for $L^2$-Betti numbers~\cite[Theorem~14.40]{lueck}. 
As a preparation, we recall the following well-known fact:

\begin{lem}[mapping degrees and index of subgroups]\label{lem:degindex}
  \hfil
  \begin{enumerate}
  \item
    Let $M$ and $N$ be oriented closed connected manifolds of the same
    dimension and let $f \colon M\longrightarrow N$ be a continuous map of non-zero
    degree. Then $\im \pi_1(f)$ has finite index in~$\pi_1(N)$.
  \item
    Let $M$ be an oriented closed connected aspherical manifold with residually
    finite fundamental group and let $f \colon M \longrightarrow M$
    be a continuous map with~$\deg f \not\in \{-1,0,1\}$. Then
    \[ 1 < \bigl[ \pi_1(M) : \im \pi_1(f) \bigr] < \infty.
    \]
  \end{enumerate}
\end{lem}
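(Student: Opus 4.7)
For part~(1), I proceed via covering-space lifting together with the fact that non-compact connected $n$-manifolds have vanishing top integral homology. The plan is to let $p \colon \widetilde N \longrightarrow N$ be the covering of~$N$ associated with the subgroup~$\im \pi_1(f) \subset \pi_1(N)$. Since $\pi_1(f)$ maps $\pi_1(M)$ into $\im \pi_1(p) = \im \pi_1(f)$, the lifting criterion yields a continuous map~$\widetilde f \colon M \longrightarrow \widetilde N$ with $p \circ \widetilde f = f$. If the index $[\pi_1(N) : \im \pi_1(f)]$ were infinite, then $\widetilde N$ would be a connected non-compact $n$-manifold, so $H_n(\widetilde N;\Z) = 0$; applying $H_n(\args;\Z)$ to the factorisation $f = p \circ \widetilde f$ would then force $H_n(f;\Z)[M]_\Z = H_n(p;\Z) \circ H_n(\widetilde f;\Z)[M]_\Z = 0$, contradicting $\deg f \neq 0$.

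For part~(2), part~(1) already yields $[\pi_1(M) : \im \pi_1(f)] < \infty$, so only the strict lower bound remains. I would argue by contradiction: assume that $\pi_1(f)$ is surjective. Because $M$ is a closed manifold, $\pi_1(M)$ is finitely generated; combined with residual finiteness, Malcev's theorem then implies that $\pi_1(M)$ is Hopfian, so the surjective endomorphism~$\pi_1(f)$ is in fact an automorphism. Since $M$ is aspherical, a self-map inducing a $\pi_1$-isomorphism is a homotopy equivalence (by the standard classification of maps into Eilenberg-MacLane spaces, applied to a map realising~$\pi_1(f)^{-1}$ as a homotopy inverse to~$f$). But a homotopy equivalence between oriented closed connected manifolds has degree~$\pm 1$, contradicting the hypothesis $\deg f \not\in \{-1,0,1\}$.

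I do not anticipate a genuine obstacle: both parts merely assemble standard tools, namely the lifting criterion, the vanishing of top homology for non-compact connected manifolds, the Hopfian property of finitely generated residually finite groups, and the homotopy classification of maps into aspherical spaces. The only mildly delicate bookkeeping is to make sure that finite generation of~$\pi_1(M)$ is invoked alongside residual finiteness when appealing to the Hopfian property, and that the lift~$\widetilde f$ in part~(1) is set up with the correct target covering so that the degree computation is transparent.
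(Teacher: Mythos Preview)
Your proposal is correct and follows essentially the same route as the paper: for part~(1) you both pass to the covering of~$N$ associated with~$\im\pi_1(f)$ and use that a lift of~$f$ cannot have non-zero degree into an infinite-sheeted (hence non-compact) cover; for part~(2) you both argue by contradiction via the Hopfian property and asphericity to force~$f$ to be a homotopy equivalence. Your write-up is in fact more careful than the paper's, which simply invokes ``residually finite $\Rightarrow$ Hopfian'' without explicitly flagging the finite-generation hypothesis you rightly supply.
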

\begin{proof}
  The first part follows from covering theory (by considering
  the covering of~$N$ associated with~$\im \pi_1(f)$). 

  For the second part, we write $\Gamma := \pi_1(M)$. 
  By the first part, $\im \pi_1(f)$ has finite index in~$\Gamma$. 

  \emph{Assume} for a contradiction that $[\Gamma : \im \pi_1(f)] = 1$,
  i.e., that $\pi_1(f)$ is an epimorphism. As residually finite group,
  the group~$\Gamma$ is Hopfian. Hence, the epimorphism~$\pi_1(f)
  \colon \Gamma \longrightarrow \Gamma$ is an isomorphism. Because $M$
  is aspherical, this implies that $f$ is a homotopy
  equivalence.  In particular, $\deg f \in \{-1,1\}$, which is a
  contradiction. Therefore, $[\Gamma : \im \pi_1(f)] > 1$.
\end{proof}

\begin{proof}[Proof of Theorem~\ref{thm:selfmapgen}]
  Let $\Gamma := \pi_1(M)$. 
  For~$k \in \N$ let~$f_k := f^{\circ k} \colon M \longrightarrow M$
  be the $k$-fold composition of~$f$ and let
  $\Gamma_k := \im \pi_1(f_k) \subset \Gamma$ be the corresponding
  subgroup. 
  For~$k \in \N$, 
  let $p_k \colon M_k \longrightarrow M$ be the covering associated
  with the subgroup~$\Gamma_k \subset \Gamma$; thus, there
  is a $p_k$-lift~$\overline f_k \colon M \longrightarrow M_k$ of~$f_k$.
  By construction,
  \begin{align*}
        \deg \overline f_k \cdot \deg p_k 
    & 
      = \deg (p_k \circ \overline f_k)
      = \deg f_k
    \\
    & = (\deg f)^k,
  \end{align*}
  and so $\deg \overline f_k$ is a unit in~$R$.
  In view of Proposition~\ref{prop:degreemon},
  we obtain
  \[ \suv {M_k} R \leq \suv M R;  
  \]
  thus, 
  \[ \stsuv M R
  \leq \inf_{k \in \N} \frac{\suv {M_k} R}{\mathopen{|}\deg p_k|}
  \leq \inf_{k \in \N} \frac{\suv{M} R}{[\Gamma : \Gamma_k]}.
  \]

  Therefore, it suffices to show that
  the sequence~$([\Gamma :\Gamma_k])_{k \in \N}$ is unbounded: 
  To this end, for~$k \in \N$, we consider the self-map
  \[ g_k := \overline f_k \circ p_k \colon M_k \longrightarrow M_k
  \]
  of~$M_k$. Because $\Gamma_k$ has finite index in~$\Gamma$, the
  covering manifold~$M_k$ is compact; morevoer, $M_k$ is aspherical
  and oriented and $\pi_1(M_k) \cong \Gamma_k \subset \Gamma$ is residually
  finite. Using the fact that
  \[ \deg g_k = \deg \overline f_k \cdot \deg p_k = (\deg f)^k \not \in \{-1,0,1\},
  \]
  we obtain from Lemma~\ref{lem:degindex} that
  \begin{align*}
    1 & < \bigl[ \pi_1(M_k) : \im \pi_1(g_k) \bigr]
    = \bigl[ \Gamma_k : \pi_1(f_k)(\Gamma_k) \bigr]
    = [\Gamma_k : \Gamma_{2 \cdot k} ].
  \end{align*}
  In particular, the sequence~$([\Gamma : \Gamma_{2^k}])_{k \in \N}$
  is unbounded. 
\end{proof}

\begin{proof}[Proof of Theorem~\ref{thm:selfmap}]
  If $p \nmid \deg f$, then $\deg f$ is a unit in~$\F_p$ and so Theorem~\ref{thm:selfmapgen}
  can be applied.
\end{proof}

\subsection{The dynamical view}\label{subsec:dyndef}

In order to formulate and prove Theorem~\ref{thm:dynview}, we first introduce the type of
dynamical systems that we are interested in:

\begin{defi}
  Let $\Gamma$ be a group. A \emph{standard $\Gamma$-space}
  is a standard Borel probability space~$(X,\mu)$ together with
  a $\mu$-preserving $\Gamma$-action.

  If $\alpha = \Gamma \actson (X,\mu)$ is a standard $\Gamma$-space
  and $R$ is a commutative ring with unit, then we write
  \[ L^\infty(\alpha,R) := L^\infty(X,\Z) \otimes_\Z R.
  \]
  This module is equipped with the $\Z\Gamma$-right module structure
  induced by the $\Gamma$-action~$\alpha$ on~$X$.  More concretely,
  $L^\infty(\alpha,R)$ can be viewed as the $\Z\Gamma$-module of functions~$X
  \longrightarrow R$ with finite image and measurable pre-images (up
  to equality $\mu$-almost everywhere).

  If $f \colon X \longrightarrow R$ is an element of~$L^\infty(\alpha,R)$, then
  we write
  \[ \supp f := f^{-1} ( R \setminus \{0\}) \subset X.
  \]
\end{defi}

\begin{exa}[profinite completion]
  If $\Gamma$ is a finitely generated residually finite group, then
  the profinite completion~$\pfc \Gamma$ of~$\Gamma$ is a standard
  Borel space. The canonical translation action of~$\Gamma$
  on~$\pfc \Gamma$ is measure preserving with respect to the inverse
  limit probability measure of the normalised counting measures on the
  finite quotients of~$\Gamma$. Moreover, this action
  is essentially free. For simplicity, we also denote the corresponding
  standard $\Gamma$-space by~$\pfc \Gamma$.
\end{exa}

In analogy with parametrised/integral foliated simplicial
volume~\cite{mschmidt,loehpagliantini}, we consider weightless
parametrised simplicial volumes (by ignoring the magnitude
of the coefficients):

\begin{defi}[weightless parametrised simplicial volume]
  Let $M$ be an oriented closed connected $n$-manifold, let $\alpha :=
  \pi_1(M) \actson (X,\mu)$ be a standard $\pi_1(M)$-space, and let
  $R$ be a commutative ring with unit.  A cycle~$c \in
  C_n(M;L^\infty(\alpha,R)) = L^\infty(X,R) \otimes_{\Z \pi_1(M)}
  C_n(\widetilde M;\Z)$ is an \emph{$(\alpha;R)$-fundamental cycle
    of~$M$} if~$c$ is homologous (in~$C_*(M; L^\infty(\alpha,R))$) to
  an integral fundamental cycle of~$M$ (via the canonical map~$C_*(M;\Z)
  \longrightarrow C_*(M;L^\infty(\alpha,R))$ given by the constant functions).   
  The \emph{weightless
    parametrised $R$-simplicial volume} of~$M$ is defined as
  \begin{align*}
    \suv M {\alpha;R}
  := \inf \biggl\{ \sum_{j=1}^m \mu(\supp f_j)
    \biggm| \;
    & \sum_{j=1}^m f_j \otimes \sigma_j \in C_n\bigl(M; L^\infty(\alpha,R)\bigr)
    \\
    & \text{is an $(\alpha;R)$-fundamental cycle of~$M$}
  \biggr\}. 
  \end{align*}
\end{defi}

Clearly, in the situation of the previous definition, we have 
\[ 0 \leq \suv M {\alpha;R} \leq \suv M {\alpha;\Z} \leq \ifsv M ^\alpha,
\]
where $\ifsv M ^\alpha$ denotes the ordinary parametrised simplicial volume
with parameter space~$\alpha$.

Finally, we prove Theorem~\ref{thm:dynview}; again, we 
establish a slightly more general version:

\begin{thm}\label{thm:dynviewgen}
  Let $M$ be an oriented closed connected manifold with residually finite fundamental
  group and let $R$ be a commutative ring with unit. Then
  \[ \stsuv M R = \suv M {\pfc{\pi_1(M)};R}.
  \]
\end{thm}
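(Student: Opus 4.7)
My plan is to prove the two inequalities separately, following the pattern established for stable integral simplicial volume in~\cite[Theorem~6.6]{loehpagliantini}, with the counting norm replacing the $\ell^1$-norm. Set $\Gamma := \pi_1(M)$ and $n := \dim M$ throughout.

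For the inequality $\suv M{\pfc\Gamma;R} \leq \stsuv M R$, I would start from a finite $d$-sheeted covering $p \colon N \to M$ corresponding to a subgroup $\Lambda \subset \Gamma$, together with an $R$-fundamental cycle $c = \sum_{i=1}^m a_i \tau_i \in C_n(N;R)$ with $m = \suv N R$. Fixing lifts $\widetilde \tau_i \colon \Delta^n \to \widetilde M$, the closure $\overline \Lambda$ of~$\Lambda$ in~$\pfc\Gamma$ is a clopen subgroup of index~$d$, so its cosets partition $\pfc\Gamma$ into $d$ clopen pieces of measure~$1/d$. Using for each~$i$ the characteristic function $\chi_i$ of the coset distinguished by the lift~$\widetilde \tau_i$, I would assemble
\[
 c' \;=\; \sum_{i=1}^m a_i\,\chi_i \otimes \widetilde \tau_i \;\in\; C_n\bigl(M; L^\infty(\pfc\Gamma, R)\bigr),
\]
verify that $c'$ is a $(\pfc\Gamma;R)$-fundamental cycle, and read off that its total support mass is at most~$m/d$. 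Taking the infimum over all finite coverings then gives the inequality.

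For the converse $\stsuv M R \leq \suv M{\pfc\Gamma;R}$, the plan is as follows. Fix $\varepsilon > 0$ and pick a $(\pfc\Gamma;R)$-fundamental cycle $c = \sum_{i=1}^m f_i \otimes \sigma_i$ with $\sum_i \mu(\supp f_i) \leq \suv M{\pfc\Gamma;R} + \varepsilon$. Choose a descending chain $(\Lambda_k)_{k \in \N}$ of finite index normal subgroups of~$\Gamma$ with $\bigcap_k \Lambda_k = \{1\}$. The cycle condition on~$c$, together with a homology witness between~$c$ and a constant integral fundamental cycle of~$M$, translates into a finite system of linear equations satisfied by the coefficient functions~$f_i$, arising from the face identifications of the~$\sigma_i$ under the $\Z\Gamma$-action on~$\widetilde M$. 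Since the Borel $\sigma$-algebra of~$\pfc\Gamma$ is generated by the clopen cylinders coming from the finite quotients~$\Gamma/\Lambda_k$, for sufficiently large~$k$ each~$f_i$ should be refinable into a function~$f_i^{(k)}$ that is constant on cosets of~$\overline{\Lambda_k}$, that still satisfies the same system of equations, and with $\mu(\supp f_i^{(k)}) \leq \mu(\supp f_i) + \varepsilon/m$. Via the canonical isomorphism
\[
 L^\infty(\Gamma/\Lambda_k, R) \otimes_{\Z\Gamma} C_n(\widetilde M;\Z) \;\cong\; C_n(M_k; R),
\]
with $M_k \to M$ the covering associated to~$\Lambda_k$, the chain $c^{(k)} := \sum_i f_i^{(k)} \otimes \sigma_i$ then descends to an $R$-fundamental cycle of~$M_k$ with at most $[\Gamma:\Lambda_k] \cdot \bigl(\suv M{\pfc\Gamma;R} + 2\varepsilon\bigr)$ simplices, yielding $\stsuv M R \leq \suv M{\pfc\Gamma;R} + 2\varepsilon$; letting $\varepsilon \to 0$ will finish the argument.

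The main obstacle will be the coherent clopen approximation in this second step: I must arrange the refinements so that the~$f_i^{(k)}$ \emph{simultaneously} preserve every equation dictated by the cycle condition and the homology witness, while still controlling the total support mass. I expect this to be achievable because the constraint is a finite system of linear equations with integer coefficients on the~$f_i$, and solving it in the submodule $R[\Gamma/\Lambda_k] \subset L^\infty(\pfc\Gamma, R)$ for $k$ large enough is possible by the same density argument that underlies the rationalisation step in~\cite{loehpagliantini}.
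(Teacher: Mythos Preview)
Your proposal follows essentially the same route as the paper. The paper organises the argument around three modular ingredients: (i) the identification $\suv M{\alpha_{\{\Lambda\}};R} = [\Gamma:\Lambda]^{-1}\,\suv{M_\Lambda}R$ for finite-index~$\Lambda$; (ii) the bound $\stsuv M R \leq \suv M{\alpha_F;R}$ for finite parameter spaces~$X_F$; and (iii) a norm-preservation lemma for dense $\Z\Gamma$-submodules~$L \subset L^\infty(\pfc\Gamma,R)$, which absorbs exactly the ``coherent clopen approximation'' obstacle you single out. Your explicit constructions in both directions simply unpack these ingredients, so the content is the same.

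One point does need fixing. You assert that the Borel $\sigma$-algebra of~$\pfc\Gamma$ is generated by the cylinders coming from a single descending chain~$(\Lambda_k)_k$ with $\bigcap_k\Lambda_k=\{1\}$. This is false in general: for $\Gamma=\Z$ and $\Lambda_k=2^k\Z$ the associated cylinders only see the $\Z_2$-factor of~$\widehat\Z$, so a function on~$\widehat\Z$ depending on other prime components cannot be approximated in your sense, and your density claim collapses. The condition $\bigcap_k\Lambda_k=\{1\}$ is too weak; you need the chain to be \emph{cofinal} in the directed set of all finite-index subgroups (which is easy to arrange since $\Gamma=\pi_1(M)$ is finitely generated), or---as the paper does---work with the full system $F(S)$ of finite quotients rather than a single chain. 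With this correction your sketch goes through.
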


The proof is a straightforward adaption of the proof of the
corresponding statement for stable integral simplicial
volume~\cite[Theorem~6.6, Remark~6.7]{loehpagliantini}. We first
set up some notation for the proof:
\begin{itemize}
\item
  We abbreviate
  $\Gamma := \pi_1(M)$ and $\alpha := \Gamma \actson \pfc \Gamma$.
\item
  If $\Lambda \subset \Gamma$ is a finite index subgroup,  we write
  $M_\Lambda \longrightarrow M$ for the associated covering. 
\item
  We write $S$ for the set of all finite index subgroups and $F(S)$
  for the set of all finite subsets of~$S$.
\item
  If $F \in F(S)$, then we write $X_F := \varprojlim_{\Lambda \in F} \Gamma/\Lambda$
  and $\alpha_F := \Gamma \actson X_F$ for the associated parameter space. Moreover,
  we denote the canonical $\Gamma$-map~$\pfc \Gamma \longrightarrow X_F$ by~$\pi_F$.
\end{itemize}
In the proof of Theorem~\ref{thm:dynviewgen}, we will use the
following ingredients:
\begin{enumerate}
\item\label{ing:Lambda} If $\Lambda$ is a finite index subgroup of~$\Gamma$, then
  \[ \suv M {\alpha_{\{\Lambda\}};R}
     = \frac 1 {[\Gamma :\Lambda]} \cdot \suv {M_\Lambda} R.
  \]
  [The proof of the corresponding statement for parametrised integral
    simplicial volume~\cite[Proposition~4.26, Corollary~4.27]{loehpagliantini}
    carries over to the weightless setting, because the division by the
    index happens inside the probability space.]
\item\label{ing:F} If $F \in F(S)$, then $\alpha_F$ is a finite
  $\Gamma$-probability space and thus a finite convex combination of
  coset spaces as in~(\ref{ing:Lambda}). Therefore,
  \[ \stsuv M R \leq \suv M {\alpha_F;R}.
  \]
  [Again, the arguments of the classical
    case~\cite[Proposition~4.15]{loehpagliantini} also work in the
    weightless setting.]
\item\label{ing:L} Let $L \subset L^\infty(\alpha,R)$ be a $\Z\Gamma$-submodule
  that is dense in the following sense:
  \[ \fa{f \in L^\infty(\alpha,R)} \fa{\varepsilon \in \R_{>0}}
     \exi{g \in L} \mu\bigl( \supp (f-g)\bigr) < \varepsilon.
  \]
  Then the induced homomorphism~$H_n(M;L) \longrightarrow
  H_n(M;L^\infty(\alpha,R))$ is norm-preserving with respect
  to the weightless norms.

  \noindent
  [The standard
    proof~\cite[Lemma~2.9]{mschmidt}\cite[Proposition~1.7]{loehphd} of
    statements of this type -- by approximating boundaries -- also
    works in the weightless setting.]
\end{enumerate}

\begin{proof}[Proof of Theorem~\ref{thm:dynviewgen}]
  We first prove that~$\suv M {\alpha,R} \leq \stsuv M R$:
  If $\Lambda \in S$, then the canonical projection~$\alpha \longrightarrow \alpha_{\{\Lambda\}}$
  and~(\ref{ing:Lambda}) show that
  \[ \suv M {\alpha;R} \leq \suv M {\alpha_{\{\Lambda\}};R}
      = \frac1{[\Gamma:\Lambda]} \cdot \suv {M_\Lambda} R.
  \]
  Every (connected) finite covering of~$M$ corresponds to a finite
  index subgroup of~$\pi_1(M)$. Therefore, taking the infimum over all~$\Lambda$
  in~$S$ implies 
  \[ \suv M {\alpha;R} \leq \inf_{\Lambda \in S} \frac1{[\Gamma:\Lambda]} \cdot \suv {M_\Lambda} R
     = \stsuv M R.
  \]

  Conversely, we now show that $\stsuv M R \leq \suv M{\alpha,R}$: 
  To this end we consider the $\Z\Gamma$-submodule
  \[ L := \bigcup_{F \in F(S)} \bigl\{ f \circ \pi_F \bigm| f \in L^\infty(\alpha_F,R) \bigr\}
  \]
  of~$L^\infty(\alpha,R)$. The submodule~$L$ is dense
  in~$L^\infty(\alpha,R)$ in the sense of~(\ref{ing:L}): Let $\sigma$ be the
  $\sigma$-algebra of~$\pfc \Gamma$. The set
  \[ \sigma' := \bigl\{ A \in \sigma
  \bigm| \fa{\varepsilon \in \R_{>0}}
         \exi{B \in \sigma} \chi_B \in L \land \mu(A \triangle B) < \varepsilon \bigr\} 
  \]
  is a $\sigma$-algebra. Moreover, $\sigma'$ is contained in~$\sigma$
  and $\sigma'$ contains for every~$F \in F(S)$ the $\pi_F$-preimage
  of the $\sigma$-algebra of~$X_F$. Hence, $\sigma' = \sigma$ and so
  $L$ is dense in~$L^\infty(\alpha,R)$.

  Combining (\ref{ing:F}), (\ref{ing:L}), and the construction of~$L$, we obtain
  \[ \stsuv M R \leq \inf_{F \in F(S)} \suv M {\alpha_F;R} = \suv M {\alpha;R}.
  \qedhere
  \]
\end{proof}

\begin{exa}[small amenable covers]\label{exa:smallamenable}
  Let $M$ be an oriented closed connected aspherical (triangulable)
  $n$-manifold with residually finite fundamental group that admits an
  open cover by amenable sets such that each point of~$M$ is contained
  in at most~$n$ of these sets. Then
  \[ \stsuv M R = 0
  \]
  for all commutative rings~$R$ with unit (in particular, $\stsvp M p = 0$ for
  all prime numbers~$p$). Before giving a proof of this statement, we recall
  that a subset~$U \subset M$ of~$M$ is \emph{amenable} if for every~$x \in U$
  the image of the homomorphism~$\pi_1(U,x) \longrightarrow \pi_1(M,x)$ induced
  by the inclusion is amenable.

  Indeed, by work of Sauer~\cite[proof of Theorem~B, Section~5.3]{sauer} we have
  \[ \suv M {\pfc{\pi_1(M)}; \Z} = 0
  \]
  (Sauer's notion of mass of the fundamental class of~$M$ with respect
  to the action of~$\pi_1(M)$ on~$\pfc{\pi_1(M)}$ coincides with~$\suv
  M {\pfc{\pi_1(M)};\Z}$). Therefore, the canonical ring homomorphism~$\Z \longrightarrow R$
  shows that
  \[ 0 \leq \suv M {\pfc{\pi_1(M)};R} \leq \suv M {\pfc{\pi_1(M)};\Z} = 0.
  \]
  Applying Theorem~\ref{thm:dynviewgen}, we hence obtain
  \[ \stsuv M R = \suv M {\pfc{\pi_1(M)};R} = 0.
  \]
\end{exa}
  
\begin{ack}
  I am grateful to the anonymous referee for carefully reading
  the manuscript.
\end{ack}


\medskip
\vfill

\noindent
\emph{Clara L\"oh}\\[.5em]
  {\small
  \begin{tabular}{@{\qquad}l}
    Fakult\"at f\"ur Mathematik,
    Universit\"at Regensburg,
    93040 Regensburg\\
    \textsf{clara.loeh@mathematik.uni-r.de},\\
    \textsf{http://www.mathematik.uni-r.de/loeh}
  \end{tabular}}

\end{document}